\def\mes{\operatorname{mes}}
\def\Sym{\operatorname{Sym}}
\def\supp{\operatorname{supp}}
\theoremstyle{plain}
\newtheorem{thm}{Theorem}
\newtheorem{defin}{Definition}[section]
\newtheorem{qn}[defin]{Problem}
\newtheorem{rmk}[defin]{Remark}
\newtheorem{cor}[defin]{Corollary}
\newtheorem{lemma}[defin]{Lemma}
\newtheorem*{thm*}{Theorem}
\begin{document}

\author{Ilya Goldsheid\textsuperscript{1} and Sasha Sodin\textsuperscript{1,2}}
\footnotetext[1]{School of Mathematical Sciences,
Queen Mary University of London, 
Mile End Road, London E1 4NS, UK. email: [i.goldsheid, a.sodin]@qmul.ac.uk.}
\footnotetext[2]{Supported in part by a Royal Society Wolfson Research Merit Award (WM170012), a Philip Leverhulme Prize of the Leverhulme Trust (PLP-2020-064), and a Morris Belkin Visiting Professorship at the Weizmann Institute of Science.}

\title{Sets of  non-Lyapunov behaviour for scalar and matrix Schr\"odinger  cocycles}
\maketitle

\begin{abstract}
We discuss  the growth of the singular values of symplectic transfer matrices associated with ergodic discrete Schr\"odinger operators in one dimension, with scalar and matrix-valued potentials. While for an individual value of the spectral parameter the rate of exponential growth is almost surely governed by the Lyapunov exponents, this is not, in general, true simultaneously for all the values of the parameter. 
The structure of the exceptional sets is interesting in its own right, and is also of importance in the spectral analysis of the operators. We present  new results along with amplifications and generalisations of several  older ones, and also list a few open questions.

Here are two sample results. On the negative side,  for any square-summable sequence $p_n$ there is a residual set of energies in the  spectrum on which the middle singular value (the $W$-th out of $2W$) grows no faster than $p_n^{-1}$. On the positive side, for a large class of cocycles including the i.i.d.\ ones,  the set of energies at which the growth of the singular values is not as given by the Lyapunov exponents has zero Hausdorff measure with respect to any gauge function $\rho(t)$ such that $\rho(t)/t$ is integrable at zero. 

The employed arguments from the theory of subharmonic functions also yield a generalisation of the Thouless formula, possibly of independent interest: for each $k$, the average of the first $k$ Lyapunov exponents is the logarithmic potential of a probability measure.
\end{abstract}

\section{Introduction}

Let $(\Omega, \mathcal B, \mathbb P)$ be a probability space, and let $\mathbf T: \Omega \to \Omega$ be an ergodic transformation. Also let $\mathbf F : \Omega \to \Sym_W(\mathbb R)$ be a measurable function from $\Omega$ to the space of symmetric $W \times W$ matrices such that
\begin{equation}\label{eq:logint}
\mathbb E \log_+ \| \mathbf F(\omega) \| < \infty~.
\end{equation}
We refer to $\mathbf F$ as the potential function.

To every $\omega \in \Omega$ we associate a sequence of $W \times W$ matrices $V_{n, \omega}  = \mathbf  F(\mathbf T^n \omega)$, $n \geq 0$. Then for $z \in \mathbb C$ consider the $2W\times 2W$ symplectic matrices
\[ T_{n, \omega} (z) = \left( \begin{array}{cc} z \mathbbm 1 - V_{n,\omega} &  - \mathbbm 1 \\ \mathbbm 1 & 0 \end{array} \right)~, \quad
\Phi_{n,\omega}(z)  =  
T_{n, \omega} (z) T_{n-1,\omega}(z)  \cdots T_{1, \omega} (z)~.
\]
The matrices $T_{n,\omega}$ and $\Phi_{n,\omega}$ appear naturally as the transfer matrices of the (matrix) discrete Schr\"odinger operator $H_\omega$ acting on a dense subspace of $\mathbb \ell_2(\mathbb N \to \mathbb R^W)$ via
\[ (H_\omega \psi)(n) = \begin{cases}
\psi(n+1) + V_{n,\omega} \psi(n) + \psi(n-1)~, &n \geq 2~; \\
\psi(2) + V_{1,\omega} \psi(1)~, &n=1~. 
\end{cases}\]
Due to the cocycle property $\Phi_{n+m,\omega}(z) = \Phi_{n,T^m \omega}(z)   \Phi_{m,\omega}(z)$, the matrices $\Phi_{n,\omega}(z)$ are said to form a matrix Schr\"odinger cocycle.

For $j =1, \cdots, 2W$, let 
\[ \gamma_j(z) = \lim_{n \to \infty} \frac1n \mathbb E \log s_j(\Phi_{n,\omega}(z))  \]
be the $j$-th Lyapunov exponent (where $s_j$ stands for the $j$-th largest singular value). The assumption (\ref{eq:logint}) implies that 
these numbers are finite. Due to the symplectic structure, $s_{2W+1-j}(\Phi_{n,\omega}(z)) = s_j^{-1}(\Phi_{n,\omega}(z))$, whence $\gamma_{2W+1-j}(z) = - \gamma_j(z)$.
According to a theorem of Furstenberg and Kesten \cite{FK},  for any fixed $z \in \mathbb C$ one has  almost surely
\begin{equation}\label{eq:lyap}
\lim_{n \to \infty} \frac1n \mathbb  \log \, s_j(\Phi_{n,\omega}(z)) =  \gamma_j(z)~.
\end{equation}
However, the Furstenberg--Kesten theorem does not imply that (almost surely) the Lyapunov behaviour (\ref{eq:lyap}) holds simultaneously for all $z \in \mathbb C$, and in fact this is not always true. Most of the proofs of Anderson localisation (for various classes of  quasi-one-dimensional operators such as $H_\omega$ and their full-line counterparts) require understanding the structure of the sets of non-Lyapunov behaviour, or at least demonstrating that these sets do not support spectral measure. Thus, numerous  properties of these sets have been established for specific operators and dynamical systems, especially, in the scalar case $W = 1$.

On the other hand,  relatively few general results (not restricted to special dynamical systems) are available in the published literature, even in the scalar case; the matrix case is virtually unexplored. The goal of the current note is to summarise what is known to us at the moment,  avoiding unnecessarily restrictive  assumptions, as much as possible. We also collect several open questions which, in our opinion,  deserve further investigation.

\paragraph{Assumptions}
For convenience of reference, we collect our assumptions in the following display:
\begin{equation}\label{eq:assum}
\text{$\mathbf T$ is ergodic, $\mathbf F: \Omega \to \Sym_W(\mathbb R)$ is measurable and $\mathbb E \log_+ \|\mathbf F\| < \infty$.}
\end{equation}
Some results can be strengthened in the uniquely ergodic case:
\begin{equation}\label{eq:assum-ue}
\left\{\begin{aligned}
&\text{$\Omega$ is a compact, metrisable topological space,}\\
&\text{$\mathbf T$ is continuous and uniquely ergodic,}\\
&\text{$\mathbf F: \Omega \to \Sym_W(\mathbb R)$ is continuous:}
\end{aligned}\right.
\end{equation}
in this setting, some of the results hold for all (rather than just almost all) values of $\omega \in \Omega$.
We shall write that a property holds almost* surely if it holds almost surely under the assumptions (\ref{eq:assum}) and for all $\omega$ under the assumptions (\ref{eq:assum-ue}).

On the opposite extreme from unique ergodicity, we single out   the   special case of independent matrices:
\begin{equation}\label{eq:assumiid}
\left\{\begin{aligned}
&\text{$V_n$ are independent, identically distributed, with $\mathbb E \|V_0\|^\eta < \infty$ for some $\eta > 0$,}\\
&\text{and the support of $V_0$ is irreducible and contains $V, V'$  such that $\operatorname{rk} (V-V') = 1$.}
\end{aligned}\right.
\end{equation}
The r\^ole of the condition on the support is as follows: in this case, the combination of the result of \cite{G95} with \cite{GM} implies that 
the Lyapunov spectrum is simple for all $z \in \mathbb C$:
\[ \gamma_1(z) > \cdots > \gamma_W(z) > 0~. \]
Furthermore, one has the locally uniform large deviation estimate, going back to the work of Le Page \cite{LeP} (see e.g.\ \cite{DK})
\begin{equation}\label{eq:largedev}
\forall R, \epsilon > 0 \,\, \exists C, c > 0: \,\, \forall |E| < R, \,  1 \leq j \leq W \,\, \mathbb P \left\{ \left| \frac{1}{n} \log s_j(\Phi_{n,\omega}(E)) - \gamma_j(E) \right| \geq \epsilon \right\} \leq C e^{-c n}~.
\end{equation}
Estimates similar to  (\ref{eq:largedev}) are also available for other dynamical systems, for example, for irrational shifts with Diophantine frequencies when the potential function is analytic (see, particularly, the work of Goldstein and Schlag \cite{GS}, the survey of Schlag \cite{Schlag}, and references therein).

\paragraph{Sub-Lyapunov behaviour of $\gamma_W$} 
We first consider the special case $j = W$ (which boasts the closest relation with the spectral properties of the operator $H_\omega$). In this case (\ref{eq:lyap}) almost surely holds (simultaneously) for all $z$ outside the essential spectrum $S$ of $H_\omega$ (it is known that $S$ is almost surely equal to a deterministic set). This was proved for $W=1$ in \cite[Theorem 8]{G75} and (in the continual setting) by Johnson in \cite{J}; see also the recent work of Zhang \cite{Zh}. For larger $W$, the arguments in the cited works yield the following (cf.\  Johnson--Nerurkar \cite{JN}):
\begin{thm}[after \cite{G75,J,JN}]\label{thm:outsidesp}
Assume (\ref{eq:assum}). Then almost* surely one has the following: for any compact $K \subset \mathbb C \setminus S$,
\begin{eqnarray}
\label{eq:unifhyp1}
&&\frac1n \sum_{j=1}^W \log \, s_j(\Phi_{n,\omega}(z)) \rightrightarrows  \sum_{j=1}^W \gamma_j(z) \quad \text{on $K$}~,\\
\label{eq:unifhyp} 
&&\liminf_{n \to \infty} \inf_{z \in K} \left[ \frac1n   \log \, s_W(\Phi_{n,\omega}(z))  - \gamma_W(z) \right] \geq 0~.\end{eqnarray}
\end{thm}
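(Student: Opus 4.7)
For $k = 1, \ldots, W$, set
\[
u_n^{(k)}(z) := \frac{1}{n}\log\|\wedge^k \Phi_{n,\omega}(z)\| = \frac{1}{n}\sum_{j=1}^k \log s_j(\Phi_{n,\omega}(z));
\]
as the logarithm of the norm of a holomorphic matrix-valued function, $u_n^{(k)}$ is subharmonic in $z$, and (\ref{eq:logint}) makes the family $\{u_n^{(k)}\}_n$ uniformly bounded above on compact subsets of $\mathbb C$. Applying Furstenberg--Kesten to the exterior-power cocycle $\wedge^k \Phi_{n,\omega}$ (whose top Lyapunov exponent is $\gamma_1(z) + \cdots + \gamma_k(z)$) gives pointwise almost sure convergence $u_n^{(k)}(z) \to u^{(k)}(z) := \sum_{j=1}^k \gamma_j(z)$ at every $z \in \mathbb{C}$; a standard subharmonic compactness argument upgrades this to convergence in $L^1_{\mathrm{loc}}(\mathbb C)$, with subharmonic (hence upper semicontinuous) limit.

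One-sided uniform convergence is immediate from Hartogs' lemma: a uniformly bounded-above subharmonic sequence with $\limsup_n u_n \leq u$ pointwise and $u$ upper semicontinuous satisfies $\limsup_n \sup_K (u_n - u) \leq 0$ on any compact $K$. For the matching lower bound in (\ref{eq:unifhyp1}) on $K \subset \mathbb C \setminus S$, the plan is to invoke an exponential dichotomy. For $z \notin S$ the operator $H_\omega - z$ is invertible, and a Combes--Thomas estimate---with constants uniform for $z$ in compact $K \subset \mathbb C \setminus S$---gives exponential decay of $(H_\omega - z)^{-1}$; in the transfer-matrix language this yields a splitting $\mathbb C^{2W} = E^s_\omega(z) \oplus E^u_\omega(z)$ into $W$-dimensional subspaces, with constants $C, \lambda > 0$ (depending on $K$) such that
\[
\|\Phi_{n,\omega}(z) v\| \leq C e^{-\lambda n}\|v\| \text{ for } v \in E^s_\omega(z), \quad \|\Phi_{n,\omega}(z) v\| \geq C^{-1} e^{\lambda n}\|v\| \text{ for } v \in E^u_\omega(z).
\]
Under this dichotomy $\|\wedge^W \Phi_{n,\omega}(z)\|$ is comparable, up to a bounded multiplicative factor, with the determinant $|\det \Phi_{n,\omega}(z)|_{E^u_\omega(z)}|$ on the $W$-dimensional unstable bundle, whose logarithmic growth rate equals $\sum_{j=1}^W \gamma_j(z)$. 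Ergodicity---unique ergodicity under (\ref{eq:assum-ue})---combined with the $z$-uniform hyperbolicity yields the matching $\liminf_n \inf_K (u_n^{(W)} - u^{(W)}) \geq 0$, completing (\ref{eq:unifhyp1}).

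Assertion (\ref{eq:unifhyp}) is then a formal consequence: decompose $\frac{1}{n}\log s_W(\Phi_{n,\omega}(z)) = u_n^{(W)}(z) - u_n^{(W-1)}(z)$; by (\ref{eq:unifhyp1}) the first term converges uniformly on $K$ to $u^{(W)}$, while Hartogs' lemma applied to the subharmonic sequence $u_n^{(W-1)}$ gives $\limsup_n \sup_K (u_n^{(W-1)} - u^{(W-1)}) \leq 0$; subtracting and using $u^{(W)} - u^{(W-1)} = \gamma_W$ yields (\ref{eq:unifhyp}). The main technical obstacle is the construction of the exponential dichotomy with $z$-uniform rates on compacts of $\mathbb C \setminus S$: in the setting (\ref{eq:assum-ue}) this is the matrix-cocycle analogue of Johnson's theorem, treated by Johnson--Nerurkar \cite{JN}; under the weaker (\ref{eq:assum}) one must construct the $W$-dimensional splitting measurably in $\omega$ while preserving $z$-uniform hyperbolicity on compacts off $S$, which is the delicate step for $W > 1$.
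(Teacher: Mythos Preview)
Your approach is essentially the paper's: it does not spell out a proof of (\ref{eq:unifhyp1}) but cites the exponential-dichotomy arguments of \cite{G75,J,JN}, which is precisely the Combes--Thomas/uniform-hyperbolicity scheme you outline, and it then derives (\ref{eq:unifhyp}) from (\ref{eq:unifhyp1}) by subtracting the Craig--Simon uniform upper bound (\ref{eq:unif}) with $k=W-1$, i.e.\ exactly your decomposition $\tfrac1n\log s_W = u_n^{(W)} - u_n^{(W-1)}$ plus Hartogs. One small correction: Hartogs' lemma needs the comparison function to be \emph{continuous}, not merely upper semicontinuous (your stated version is false as written); this is harmless here because $\Gamma_{W-1}$ is continuous on compacts of $\mathbb C\setminus S$ as a byproduct of the Johnson--Nerurkar dichotomy, and the paper's appeal to (\ref{eq:unif}) tacitly uses the same fact.
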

\noindent Here (\ref{eq:unifhyp}) follows from (\ref{eq:unifhyp1}) and a general result stated below as (\ref{eq:unif}), applied with $k = W-1$. We do not know whether necessarily
\[  \frac1n   \log \, s_W(\Phi_{n,\omega}(z))  \to \gamma_W(z) \] 
on $\mathbb C \setminus S$ (or even on $\mathbb C \setminus \mathbb R$); see further below.

\medskip \noindent
On the essential spectrum $S$, the behaviour of the singular values is completely different. For $W=1$, it was shown in \cite{G75,G80a} that  the set 
\[ \Lambda_{\tau,\omega}^W = \left\{ E \in \mathbb R\, :  \, \liminf_{n \to \infty} \frac{1}{n} \log s_W(\Phi_{n,\omega}(E)) \leq \tau\gamma_W(E)\right\} \]
is dense in  $S$ for any $1 \geq \tau \geq 1/2$.
Further works on the subject include Carmona \cite{Car}, Avron--Simon \cite{AS}, and del Rio--Makarov--Simon \cite{dRMS}. Recently, Gorodetski and Kleptsyn \cite{GorKl} studied the  question for products of (bounded) independent random matrices, allowing for more general (non-Schr\"odinger) dependence on $z$, and showed that in this case $\Lambda_{0,\omega}^{W=1}$ is almost surely residual in $S$ (i.e.\ its complement in $S$ is a countable union of nowhere dense sets). We are not aware of any published works pertaining to $W > 1$, with the exception of a short discussion in \cite{GS}.

The following strengthening and generalisation of \cite{G75,G80a} and most of the subsequent results was proved by the first author in the early 1990-s. We provide a proof in Section~\ref{s:liminf}.
\begin{thm}\label{thm:liminf} Assume (\ref{eq:assum}). Then: (a) $\Lambda_{0,\omega}^W \subset S$ is almost* surely residual (and in particular dense) in $S$. (b) If the potential function $\mathbf F$ is bounded, then for any non-negative square-summable sequence $(p_n)_{n \geq 1}$   the following set is  also almost* surely residual in $S$:
\begin{equation}\label{eq:nonlyap-liminf} 
 \left\{ E \in S \, : \, \liminf_{n \to \infty} p_n s_W(\Phi_{n, \omega}(E)) = 0 \right\}~.\end{equation}
\end{thm}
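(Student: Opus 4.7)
The proof proceeds by a Baire category argument in $S$. Rewrite the set in (\ref{eq:nonlyap-liminf}) as the countable intersection
\[
\bigcap_{k,N \geq 1} U_{k,N}^\omega, \qquad U_{k,N}^\omega = \{ E \in S : \exists\, n \geq N,\ p_n s_W(\Phi_{n,\omega}(E)) < 1/k \}\,;
\]
each $U_{k,N}^\omega$ is open in $S$ because $E \mapsto s_W(\Phi_{n,\omega}(E))$ is continuous and the inequality is strict. Since $S$ is (almost* surely) closed in $\mathbb R$ and hence completely metrisable, Baire's theorem reduces the task to showing that each $U_{k,N}^\omega$ is almost* surely dense in $S$. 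Part~(a) then follows from part~(b) by taking, for each $j \geq 1$, the square-summable sequence $p_n = e^{-n/j}$ and intersecting over $j$; for part~(a) alone the boundedness hypothesis is unnecessary, since only a subexponential bound on $s_W$ is needed.

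The density step is local: given $E_0 \in S$, $\delta > 0$, $N$, and $k$, produce $E \in (E_0-\delta, E_0+\delta)$ and $n \geq N$ with $s_W(\Phi_{n,\omega}(E)) < 1/(k p_n)$. Observe that, by Courant--Fischer, $s_W(\Phi) \leq M$ is equivalent to the existence of a $(W+1)$-dimensional subspace of $\mathbb R^{2W}$ on which $\Phi$ expands vectors by at most $M$. The symplectic identity $s_j s_{2W+1-j} = 1$ supplies a $W$-dimensional subspace contracted by $\Phi_n$ automatically (the span of the bottom $W$ right singular directions); only one additional ``almost-neutral'' direction, transverse to this stable subspace, is required.

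The extra direction will come from the spectral assumption $E_0 \in S$. The integrated density of states being positive on the interior of $S$, the operator $H^{[1,n]}_\omega$ has at least $cn$ Dirichlet eigenvalues in $(E_0-\delta, E_0+\delta)$ for some $c = c(\delta, E_0) > 0$ and all sufficiently large $n$. At any such eigenvalue $E'$, the top-left $W \times W$ block $A(E')$ of $\Phi_n(E')$ is singular; the associated normalized Dirichlet eigenfunction $\psi$ supplies a vector $\binom{\psi(1)}{0} \in \mathbb R^{2W}$ that $\Phi_n(E')$ sends to $\binom{0}{\psi(n)}$, with both $|\psi(1)|, |\psi(n)| \leq 1$. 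A first-order perturbation in $E$---whose error is controlled, under the boundedness hypothesis on $\mathbf F$, by a uniform bound $\|\partial_E \Phi_n(E)\| \leq C n e^{Cn}$ on compact sets---transfers this bound to nearby $E$, and a pigeonhole over the $cn$ candidates produces an eigenvalue within the needed tolerance $\sim 1/p_n$.

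The main technical obstacle is ensuring that the candidate vector is transverse to the symplectic stable subspace---otherwise the union is $W$-dimensional rather than $(W+1)$-dimensional and no bound on $s_W$ is gained. For $W=1$ transversality is automatic and the argument reduces to that of \cite{G75, G80a}; for $W \geq 2$ one must either combine several distinct Dirichlet eigenvalues to generate multiple candidate vectors and argue in generic position, or appeal to the Oseledets decomposition at $E_0$ to rule out degenerate alignments. A further delicacy is matching the exponential perturbation factors $e^{Cn}$ against the polynomial target $1/p_n$: the square-summability condition $\sum p_n^2 < \infty$ provides, via the density-of-states counting of eligible $E'$, exactly the slack needed for the perturbation to land within tolerance infinitely often, which is the quantitative heart of part~(b).
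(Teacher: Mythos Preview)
Your proposal is genuinely different from the paper's argument, and as written it has real gaps rather than merely cosmetic ones.

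The paper does \emph{not} construct points with small $s_W$ directly. It argues by contradiction: assuming $\liminf p_n s_W(\Phi_n(E))>0$ throughout an interval $(a,b)$ meeting $S$, it shows that the bottom-$W$ right singular subspace $F_n^-(E)$ is Cauchy in the Lagrangian Grassmannian. The square-summability hypothesis enters precisely here, through an Oseledets-type angle estimate $\sin\alpha_n \le K_E C_E^2\, p_n p_{n+1}$, so that $\sum \sin\alpha_n \le K_E C_E^2 \sum p_n^2 <\infty$. The limit $F^-(E)$ is then a pointwise limit of continuous functions, hence continuous on a dense subset of $(a,b)$ by Baire's continuity theorem. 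Finally, Schnol's lemma applied to the operators $H_\omega^F$ with \emph{varying} Lagrangian boundary conditions $F$ forces $F\cap F^-(E)\neq\{0\}$ on a dense set of $E$ for \emph{every} $F$; taking $F=F^-(E_0)^\perp$ at a continuity point $E_0$ gives the contradiction. Part (a) is then obtained by rerunning the same argument with $p_n=e^{-\epsilon n}$ and replacing the uniform bound $\|T_n\|\le K_E$ by the almost-sure subexponential bound coming from $\mathbb E\log_+\|\mathbf F\|<\infty$.

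Your constructive route via Dirichlet eigenvalues has two concrete problems. First, the vector $\binom{\psi(1)}{0}$ controls only the ratio $|\psi(n)|/|\psi(1)|$, which is \emph{not} bounded by the normalisation $\|\psi\|_2=1$; a spectral-weight pigeonhole over the $\sim cn$ eigenvalues gives at best $|\psi(1)|\gtrsim n^{-1/2}$, hence a polynomial bound on $s_W$, which is far too weak for an arbitrary square-summable $(p_n)$. Your appeal to ``exactly the slack'' provided by $\sum p_n^2<\infty$ is not substantiated: the perturbation term $\|\partial_E\Phi_n\|\lesssim n e^{Cn}$ is exponentially large while the target $1/p_n$ can grow arbitrarily slowly, so no perturbation argument of the kind you describe can close the gap. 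Second, the transversality of $\binom{\psi(1)}{0}$ to the contracted Lagrangian subspace is essential (otherwise you do not get a $(W{+}1)$-dimensional test space), and you do not establish it; ``combine several eigenvalues'' or ``appeal to Oseledets'' are suggestions, not arguments. In the paper's proof this difficulty never arises, because the r\^oles are reversed: the \emph{assumed} lower bound on $s_W$ produces a well-defined limiting $F^-(E)$, and Schnol's lemma then delivers, for every Lagrangian $F$, an abundance of intersection points---no transversality needs to be engineered.
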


\begin{qn} Let $W > 1$, $1 \leq j \leq W-1$. Does (\ref{eq:lyap}) hold  (almost surely) for all $z \in \mathbb C$? If not, where are the points at which (\ref{eq:lyap})  can fail located? What is the closure of the set 
\[ \left\{ E \in \mathbb R\, :  \,  \liminf_{n \to \infty} \frac{1}{n} \log s_j(\Phi_{n,\omega}(E)) < \gamma_j(E)\right\}? \]
\end{qn}

We believe (with some support from numerical simulations) that   (\ref{eq:lyap}) is sometimes violated also for $j < W$. However, if this is not the case, this would also have interesting spectral-theoretic applications -- see \cite{GS}. 

\paragraph{Subharmonic functions}

What follows is based on considerations from the  theory of subharmonic functions, developing ideas applied in the current context by Craig--Simon \cite{CS} and further by Simon \cite{Simon}. It is clear that the functions
\begin{equation}
z \mapsto \frac{1}{n}  \log \| \Phi_{n,\omega}(z) \|~, \quad
z \mapsto \frac{1}{n} \mathbb E \log \| \Phi_{n,\omega}(z) \|
\end{equation}
are subharmonic. Considering the wedge powers of $\Phi_{n,\omega}(z)$, we obtain that so are 
\begin{eqnarray}\label{eq:subh1}
&&z \mapsto \frac{1}{n}  \sum_{j=1}^k \log s_j( \Phi_{n,\omega}(z) )~, \\
\label{eq:subh2}
&&z \mapsto \frac{1}{n} \mathbb E \sum_{j=1}^k \log s_j( \Phi_{n,\omega}(z) )~,
\end{eqnarray}
for any $1 \leq k \leq W$. Craig and Simon showed \cite[Theorem 2.1]{CS} (for $W=1$) that $\gamma_1(E)$ is upper semicontinuous and thus subharmonic. Their argument (based on the observation that (\ref{eq:subh2}) is non-increasing on the sequence $n = 1, 2,4,8, \cdots$) applies equally well to larger $W$ and to the wedge powers of $\Phi_n(E)$, thus  the functions 
\begin{equation}\label{eq:Gamma} \Gamma_k(z) = \gamma_1(z) + \cdots + \gamma_k(z) \end{equation}
are also subharmonic.  

By definition,  (\ref{eq:subh2}) converge pointwise to $\Gamma_k(z)$, whereas the Furstenberg--Kesten theorem \cite{FK} implies that (\ref{eq:subh1})  converge to $\Gamma_k(z)$ almost everywhere. However, these notions  of  convergence are not the most convenient ones for studying sequences of subharmonic functions, and it turns out to be useful to work with the topology of distributional convergence (cf.\ Azarin \cite{Az1, Az} where this approach is systematically applied to the study of asymptotic behaviour of subharmonic functions).  Denote by $\mathcal D$ the  space of infinitely differentiable compactly supported functions from $\mathbb C$ to $\mathbb R$, and by $\mathcal D'$ the dual space of distributions.
\begin{rmk}Following \cite[Theorems 3.2.12, 3.2.13]{Hor}, we note that convergence of a sequence of subharmonic functions to a limit in $\mathcal D'$ is equivalent to convergence in $L_p^{\operatorname{loc}}$ for any $1 \leq p < \infty$, and implies the convergence of derivatives in $L_p^{\operatorname{loc}}$ for any $1 \leq p < 2$. See further Remark~\ref{rem:AzF}.
\end{rmk}
\noindent The following result, playing an important r\^ole in the sequel, is proved in   Section~\ref{s:cs}.
\begin{thm}[Motivated by Craig--Simon] \label{thm:cs} Assume (\ref{eq:assum}).  Then: (a) The functions (\ref{eq:subh2}) converge to $\Gamma_k$   in $\mathcal D'$, and the same is almost* surely true for the functions (\ref{eq:subh1}). (b) There exist probability measures $\mu_1, \cdots, \mu_W$ on $\mathbb C$ such that 
\begin{equation}\label{eq:th}\int \log_+ |z| d\mu_k(z) < \infty~, \,\, \sup_{z \in \mathbb C, 0 < r \leq 1/e} \frac{\mu_k(B(z, r)) |\log r|}{1 + \log_+ |z|} < \infty~, \,\,   \frac{\Gamma_k(z)}{k}  = \int \log |z-z'| d\mu_k(z)~.\end{equation}
\end{thm}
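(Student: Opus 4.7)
The plan is to address (a) and (b) using the Riesz representation theorem for subharmonic functions together with locally uniform growth bounds on $\Gamma_k$.

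For \emph{part (a)}, submultiplicativity of the wedge norm and the estimate $\log\|\wedge^k T_{j,\omega}(z)\|\le k\log_+(|z|+\|V_{j,\omega}\|+1)+O(1)$ give
\[
u_n^{(k)}(z):=\frac{1}{n}\mathbb E\sum_{j=1}^k\log s_j(\Phi_{n,\omega}(z)) \le C(1+\log_+|z|)
\]
uniformly in $n$, and almost surely the same bound holds for $v_n^{(k)}(\cdot,\omega):=\frac{1}{n}\sum_{j=1}^k\log s_j(\Phi_{n,\omega}(z))$ once $n$ is large enough, by Birkhoff's theorem applied to $\log_+\|V_{j,\omega}\|$. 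Pointwise, $u_n^{(k)}\to\Gamma_k$ by definition, while Furstenberg--Kesten combined with Fubini yield $v_n^{(k)}(z,\omega)\to\Gamma_k(z)$ for Lebesgue-almost every $z$ for almost every $\omega$. Standard precompactness for subharmonic sequences locally bounded above then implies that every subsequence has a further $L^1_{\operatorname{loc}}$-convergent subsequence whose subharmonic limit agrees a.e.\ with $\Gamma_k$ and therefore equals it, so the full sequences converge in $L^1_{\operatorname{loc}}$, equivalently in $\mathcal D'$ by the preceding remark. The ``for all $\omega$'' upgrade in the uniquely ergodic setting is obtained by combining the same compactness with the uniform-in-$\omega$ form of Kingman's subadditive ergodic theorem available there.

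For \emph{part (b)}, since $\gamma_W\ge\gamma_{W+1}=-\gamma_W$, one has $\gamma_1\ge\cdots\ge\gamma_W\ge 0$ at every $z$, hence $\Gamma_k\ge 0$ for $1\le k\le W$; together with the upper bound from (a) one obtains the two-sided growth bound $0\le\Gamma_k(z)\le C(1+\log_+|z|)$. Subharmonicity of $\Gamma_k$ and the Riesz representation theorem then produce a positive Borel measure $\nu_k=\frac{1}{2\pi}\Delta\Gamma_k$ on $\mathbb C$ and an entire harmonic function $h$ with $\Gamma_k(z)=\int\log|z-z'|\,d\nu_k(z')+h(z)$. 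To identify $\nu_k(\mathbb C)=k$ and show $h\equiv 0$, I would establish the sharp asymptotic $\Gamma_k(z)=k\log|z|+o(1)$ as $|z|\to\infty$. The upper bound follows from $\log s_i(T_{0,\omega}(z))=\log|z|+\log(1+O(\|V_{0,\omega}\|/|z|))$ for $1\le i\le W$, dominated convergence against the integrable majorant $\log(1+\|V_{0,\omega}\|)$, and submultiplicativity. For the lower bound, write $\Phi_{n,\omega}$ in $W\times W$ blocks; the upper-left block $A_{n,\omega}(z)$ has $\det A_{n,\omega}(z)$ monic of degree $nW$ (the Dirichlet characteristic polynomial on $[1,n]$), and $\|\wedge^W\Phi_{n,\omega}\|\ge|\det A_{n,\omega}|$. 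A parallel DCT argument on $\log|1-E_j/z|$ for the Dirichlet eigenvalues yields $\frac{1}{n}\mathbb E\log|\det A_{n,\omega}(z)|=W\log|z|+o(1)$ uniformly in $n$, whence $\Gamma_W\ge W\log|z|-o(1)$ and $\Gamma_k\ge k\log|z|-o(1)$ via $\Gamma_k/k\ge\Gamma_W/W$ (which holds because $\gamma_1\ge\cdots\ge\gamma_W$). A harmonic function on $\mathbb C$ of order $o(\log|z|)$ is constant, and the $o(1)$ error forces this constant to vanish; hence $\nu_k(\mathbb C)=k$, $h\equiv 0$, and $\mu_k:=\nu_k/k$ is a probability measure with $\Gamma_k(z)/k=\int\log|z-z'|\,d\mu_k(z')$.

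The two regularity estimates on $\mu_k$ then follow from the two-sided growth control. Averaging the Thouless identity at $|z|=1$ gives
\[
\frac{1}{2\pi}\int_0^{2\pi}\frac{\Gamma_k(e^{i\theta})}{k}\,d\theta=\int\log_+|z'|\,d\mu_k(z')\le C,
\]
proving the first bound. For the ball estimate, fix $z\in\mathbb C$ and $r\in(0,1/e]$, and choose $z^*\in B(z,r/2)$ with $\Gamma_k(z^*)\ge 0$ (possible since $\Gamma_k\ge 0$ a.e.); Jensen's formula for the subharmonic function $\Gamma_k/k$ on $B(z^*,1)$ gives
\[
\frac{1}{2\pi}\int_0^{2\pi}\frac{\Gamma_k(z^*+e^{i\theta})}{k}\,d\theta-\frac{\Gamma_k(z^*)}{k}=\int_0^1\frac{\mu_k(B(z^*,t))}{t}\,dt\ge\mu_k(B(z^*,2r))\log\frac{1}{2r},
\]
so combining $\mu_k(B(z,r))\le\mu_k(B(z^*,2r))$ with the upper bound on the circular mean and $\Gamma_k(z^*)\ge 0$ produces $\mu_k(B(z,r))\,|\log r|\le C(1+\log_+|z|)$. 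The main technical obstacle I foresee is the sharp asymptotic $\Gamma_k(z)=k\log|z|+o(1)$: the upper bound is a direct DCT, but the lower bound requires careful control on the tails of the Dirichlet eigenvalues of $H_{\omega}^{[1,n]}$ under only the log-moment assumption $\mathbb E\log_+\|V_{0,\omega}\|<\infty$. A secondary subtlety is the ``for all $\omega$'' version of part (a) under (\ref{eq:assum-ue}), handled by the uniform-in-$\omega$ Kingman theorem together with the same subharmonic compactness argument.
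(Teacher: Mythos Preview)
Your treatment of part~(a) is essentially the paper's: precompactness of the subharmonic sequences via the locally uniform upper bound (from $\mathbb E\log_+\|\mathbf F\|$ and Birkhoff) together with the lower bound $\Gamma_{k,n}\ge 0$, then identification of the limit via pointwise (resp.\ a.e.) convergence. One small correction: ``locally bounded above'' alone does not give precompactness in $\mathcal D'$; you also need the sequence not to drift to $-\infty$, which here is automatic from $\Gamma_{k,n}\ge 0$ (the top $k\le W$ singular values of a symplectic matrix multiply to at least $1$).

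For part~(b) your route diverges from the paper's in the lower bound. The paper establishes the circular-mean asymptotic $M_{\Gamma_k}(R)=k\log R+o(1)$ by sandwiching: the upper bound on $\gamma_1$ is as you describe, but the lower bound on $\gamma_W$ is obtained via a cone-preservation argument for $|\sin\theta|\ge R^{-1/2}$ together with a Combes--Thomas resolvent estimate for small $|\sin\theta|$. Your idea of bounding $\|\wedge^W\Phi_n(z)\|$ below by $|\det A_n(z)|=\prod_j|z-E_j|$ (the Dirichlet characteristic polynomial) is more direct, \emph{provided you work with circular means rather than pointwise}: since the $E_j$ are real,
\[
\frac{1}{2\pi}\int_0^{2\pi}\log|Re^{i\theta}-E_j|\,d\theta=\log\max(R,|E_j|)\ge\log R,
\]
so one gets $M_{\Gamma_W}(R)\ge W\log R$ with no error term and \emph{no tail control on the eigenvalues whatsoever}. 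Combined with $\Gamma_k/k\ge\Gamma_W/W$ this yields exactly what the Riesz-representation step needs, sidestepping both the cone and the Combes--Thomas arguments; your Jensen-type derivation of the ball estimate then matches the paper's second lemma.

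The one genuine issue is that you state the asymptotic $\Gamma_k(z)=k\log|z|+o(1)$ pointwise. This is stronger than needed and is precisely where the eigenvalue-tail obstacle you flag bites: under only $\mathbb E\log_+\|\mathbf F\|<\infty$ the DCT on $\log|1-E_j/z|$ is not justified, and the pointwise asymptotic can fail near the real axis. Replace the pointwise statement by the circular-mean version; then your argument goes through cleanly and is arguably shorter than the paper's. (Incidentally, $\Gamma_k\ge 0$ holds everywhere, not merely a.e., so the auxiliary point $z^*$ in your ball-estimate step can simply be taken equal to $z$.)
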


\begin{rmk}
Part (b) of Theorem~\ref{thm:cs} can be seen as a generalisation of the Thouless formula. The version of the latter for the strip, as proved by Craig--Simon \cite{CS2}, asserts that the mean Lyapunov exponent $\Gamma_W/W$ is the logarithmic potential of the integrated density of states of $H$; accordingly, $\supp \mu_W = S \subset \mathbb R$. The cases $1 \leq k < W$ are new.
\end{rmk}
\begin{qn}
What is the support $\supp \mu_k$ of $\mu_k$ for  $1 \leq k < W$?
\end{qn}

\paragraph{Convergence outside small sets}

While the exceptional sets in Theorem~\ref{thm:liminf} are large in topological sense, metrically, they are very small. We focus on the slowest (smallest positive) Lyapunov exponent $\gamma_W$ and on cocycles for which the large deviation estimates (\ref{eq:largedev}) are available (e.g.\ the independent case (\ref{eq:assumiid})).  The following result is inspired by the work of Gorodetski--Kleptsyn \cite{GorKl}.

Let $\mes_\rho$ be the Hausdorff measure defined by a gauge function $\rho(t)$ (a continuous increasing function vanishing at zero) . 
\begin{thm}\label{thm:rho} 
Suppose the cocycle $\Phi_{n,\omega}$ satisfies the large deviation estimate (\ref{eq:largedev}).
Then  almost* surely 
\begin{equation}\label{eq:mesrho0'} \mes_\rho \left\{ z \in \mathbb C \, : \,  \liminf_{n \to \infty} \frac{1}{n} \log s_W(\Phi_{n,\omega}(z)) < \gamma_W(z) \right\} = 0  \end{equation}
for  any gauge function $\rho$ such that  
\begin{equation}\label{eq:condrho}\int_0^1 \frac{\rho(t) dt}{t} < \infty~.\end{equation}
\end{thm}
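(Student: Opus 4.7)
The approach follows Gorodetski--Kleptsyn~\cite{GorKl} in spirit, combining the large deviation bound~\eqref{eq:largedev} with a Cartan-type covering lemma for subharmonic functions. The new difficulty for $W > 1$ is that $f_n(z) := \tfrac{1}{n}\log s_W(\Phi_{n,\omega}(z))$ is not itself subharmonic. My plan is to circumvent this by writing $f_n = g_n - h_n$, where
\[
 g_n(z) := \tfrac{1}{n}\log\|\Phi_{n,\omega}^{\wedge W}(z)\|, \qquad h_n(z) := \tfrac{1}{n}\log\|\Phi_{n,\omega}^{\wedge (W-1)}(z)\|
\]
are subharmonic and, by Theorem~\ref{thm:cs}, converge to $\Gamma_W$ and $\Gamma_{W-1}$; note that $\gamma_W = \Gamma_W - \Gamma_{W-1}$. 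Summing~\eqref{eq:largedev} over $1 \leq j \leq W$ yields $\mathbb P(|g_n(z) - \Gamma_W(z)| > \epsilon) + \mathbb P(|h_n(z) - \Gamma_{W-1}(z)| > \epsilon) \leq C_\epsilon e^{-c_\epsilon n}$ uniformly for $z$ in any compact set.

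Fix a compact $K \subset \mathbb C$ and $\epsilon > 0$, and set $U_n^\epsilon := \{z \in K : g_n(z) < \Gamma_W(z) - \epsilon/2\}$, $V_n^\epsilon := \{z \in K : h_n(z) > \Gamma_{W-1}(z) + \epsilon/2\}$. The intersection of the set in~\eqref{eq:mesrho0'} with $K$ is contained in $\bigcup_{k \geq 1}\limsup_n(U_n^{1/k} \cup V_n^{1/k})$, so by the Borel--Cantelli lemma for Hausdorff measures combined with Fubini, it would suffice to show
\[
\sum_n \bigl[\mathbb E\,\mes_\rho(U_n^\epsilon) + \mathbb E\,\mes_\rho(V_n^\epsilon)\bigr] < \infty
\]
for each fixed $\epsilon$. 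The aim would be to prove the quantitative bound $\mathbb E\,\mes_\rho(U_n^\epsilon) + \mathbb E\,\mes_\rho(V_n^\epsilon) \leq C\rho(e^{-cn})$; summability in $n$ is then equivalent to~\eqref{eq:condrho} via the substitution $t = e^{-cs}$, since $\sum_n \rho(e^{-cn}) \asymp \int_0^\infty \rho(e^{-cs})\,ds = c^{-1}\int_0^1 \rho(t)/t\,dt < \infty$.

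The main step, and the main obstacle, is to upgrade the pointwise exponential bound into a bound on Hausdorff $\rho$-measure. I would discretise $K$ by a $\delta_n$-net with $\delta_n = e^{-cn/3}$, of cardinality $\sim e^{2cn/3}$: by the large deviation estimate and a union bound, one has $|g_n(z_i) - \Gamma_W(z_i)|, |h_n(z_i) - \Gamma_{W-1}(z_i)| < \epsilon/4$ at every net point $z_i$ outside an event of summable probability. The crucial structural input is that $\Phi_{n,\omega}^{\wedge k}(z)$ is polynomial in $z$ of degree $\leq nk$, so the Riesz masses of $g_n$ and $h_n$ on a neighbourhood of $K$ are uniformly bounded (by $W$ and $W-1$, respectively) in $n$. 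A Cartan-type covering lemma then shows that the dips of $g_n$ below the values at the net points, and the ``spikes'' of $h_n$ above them -- the latter identified with dips of the deterministic subharmonic function $\Gamma_{W-1}$ -- can each be covered by $O(W)$ disks of total radius $\lesssim e^{-c'n}$; summing the $\rho$-measures over these exceptional balls yields the desired bound. The delicate point will be to make the Cartan-type lemma quantitative in a form compatible with arbitrary gauge functions $\rho$ satisfying~\eqref{eq:condrho}, and in particular to control $V_n^\epsilon$, where the fluctuation is driven by dips of $\Gamma_{W-1}$ rather than of the random function $h_n$.
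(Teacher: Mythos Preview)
Your decomposition $f_n = g_n - h_n$ is exactly the paper's starting point, but the treatment of the two pieces diverges, and your plan for $V_n^\epsilon$ contains a genuine gap.

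First, two smaller points. The large deviation estimate~\eqref{eq:largedev} is only assumed for real $E$, so one must first reduce to $E\in\mathbb R$ via Theorem~\ref{thm:outsidesp}; your Cartan argument in $\mathbb C$ is not supported by the hypotheses. Second, your claim of ``$O(W)$ disks'' is incorrect: the Cartan lemma (or, on $\mathbb R$, the count of intervals in a polynomial sublevel set) gives a number of pieces proportional to the degree, here $O(nW)$, not $O(W)$. With the correct count, the sum $\sum_n n\,\rho(e^{-cn})$ need not converge under~\eqref{eq:condrho}. The paper fixes this by passing to a geometric subsequence $n_i=\lfloor e^{\tau i}\rfloor$, so that $\sum_i n_i\,\rho(e^{-cn_i})\asymp\int_0^\infty\rho(e^{-cs})\,ds<\infty$; this reduction (and the check that the exceptional set along $(n_i)$ already captures the full exceptional set) is an essential step you are missing.

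The main gap is the handling of $V_n^\epsilon=\{h_n>\Gamma_{W-1}+\epsilon/2\}$. Your proposal to ``identify'' this with dips of $\Gamma_{W-1}$ does not work as stated: the dips of the \emph{fixed} subharmonic function $\Gamma_{W-1}$ form a set that does not shrink with $n$, so there is no reason for $\sum_n \mes_\rho(V_n^\epsilon)$ to converge, and subharmonic functions like $h_n$ can overshoot nearby net-point values (Cartan controls undershoot, not overshoot). The correct tool here is the uniform Craig--Simon bound of Corollary~\ref{thm:cs-unif}: since~\eqref{eq:largedev} forces $\Gamma_{W-1}$ to be continuous on compacta of $\mathbb R$, relation~\eqref{eq:unif} gives $\sup_{E\in K}[h_n(E)-\Gamma_{W-1}(E)]\to 0$ almost* surely, so $V_n^\epsilon\cap K$ is \emph{eventually empty} and contributes nothing. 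This is precisely how the paper proceeds: rather than bounding $V_n^\epsilon$, it uses Craig--Simon to convert ``$f_{n_i}$ undershoots $\gamma_W$'' directly into ``a wedge-power partial sum undershoots its limit along a subsequence'', reducing the whole problem to covering a single polynomial sublevel set on $\mathbb R$ by $O(n_i)$ intervals of exponentially small length. Once you invoke Corollary~\ref{thm:cs-unif} for the overshoot and pass to a geometric subsequence, your argument essentially collapses to the paper's.
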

In the independent and bounded (but not necessarily Schr\"odinger) case, $W=1$, Gorodetski and Kleptsyn \cite{GorKl} showed that (\ref{eq:mesrho0'}) holds for the functions $\rho(t) = t^\epsilon$, $\epsilon > 0$. A similar question for   Schr\"odinger cocycles ($W=1$) corresponding to general ergodic systems was considered by Simon \cite{Simon}.\footnote{In the proof of \cite[Theorem 1.16]{Simon} it is asserted that one has   convergence (\ref{eq:lyap}) quasi-everywhere, i.e.\ outside a set of zero logarithmic capacity, in the general setting (\ref{eq:assum}) with $W=1$; however, the  argument there only yields the counterpart of Corollary~\ref{thm:simon-corr} below. As we discuss in Corollary~\ref{cor:sim+} below, this is still sufficient to establish the conclusion of \cite[Theorem 1.16]{Simon}, thus the latter is valid as stated. Kleptsyn and Quintino \cite{KQ, Q} conjecture that  quasi-everywhere convergence in (\ref{eq:lyap}) fails in the independent case (\ref{eq:assumiid}).} 
\begin{rmk}
In a companion work \cite{Jar}, we show that the condition (\ref{eq:condrho}) can not in general be improved, as the following holds in the independent case (\ref{eq:assumiid}) with $W=1$ and sufficiently regular distribution of the potential. If $\rho$ is a gauge function such that $\rho(t)/t$ is non-increasing on $(0, 1)$ and non-integrable, then for any $\tau > 0$ and any interval $I \subset S$
\[ \mes_\rho \left\{ E \in I \, : \,  \liminf_{n\to \infty} \frac{1}{n} \log \|\Phi_{n,\omega}(E) \| \leq \tau \gamma_1(z) \right\} = \infty~. \]
\end{rmk}

\begin{rmk}\label{rem:AzF}
We do not know whether the conclusion of Theorem~\ref{thm:rho} holds in the general case (\ref{eq:assum}). However, it is proved in \cite{Az,Az1,Fav} that convergence of subharmonic functions $u_n \to u$ in $\mathcal D'$ implies the convergence in $\rho$-Hausdorff content for any $\rho$ satisfying (\ref{eq:condrho}), i.e.\
\begin{equation}\label{eq:incontent} \forall \epsilon>0 \,\, \operatorname{cont}_\rho \left\{ z \in \mathbb C \, : \, |u_n(z)-u(z)|>\epsilon \right\} \underset{n \to \infty}\longrightarrow 0~.\end{equation}
In particular, almost* surely $n^{-1} \log s_j(\Phi_{n,\omega}) \to \gamma_j$ in the topology (\ref{eq:incontent}).
\end{rmk}

\medskip \noindent 

\paragraph{No super-Lyapunov behaviour}

Theorem~\ref{thm:cs}-(a) combined with general properties of subharmonic functions \cite[Theorem 3.2.13]{Hor} implies the following (locally uniform) version of \cite[Theorem 2.3]{CS}, extended to arbitrary $W$.

\begin{cor}[after Craig--Simon]\label{thm:cs-unif} Let $1 \leq k \leq W$. Then almost* surely 
\begin{equation}\label{eq:upper} \forall z \in \mathbb C\, : \,  \limsup_{n \to \infty} \left[  \sum_{j=1}^k \frac1n \log s_j(\Phi_{n, \omega}(z))  - \Gamma_k(z) \right] \leq 0~.\end{equation}
Moreover, if $\Gamma_k$ is continuous on a compact set $K \subset \mathbb C$, then almost* surely
\begin{equation}\label{eq:unif}\limsup_{n \to \infty} \sup_{z \in K} \left[  \sum_{j=1}^k \frac1n \log s_j(\Phi_{n, \omega}(z))  - \Gamma_k(z) \right] \leq 0~.\end{equation}
\end{cor}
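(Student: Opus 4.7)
The plan is to reduce both parts of Corollary~\ref{thm:cs-unif} to Theorem~\ref{thm:cs}(a) via the classical sub-mean-value inequality for subharmonic functions. Set $u_n(z) = \frac{1}{n} \sum_{j=1}^k \log s_j(\Phi_{n,\omega}(z))$; each $u_n$ is subharmonic by (\ref{eq:subh1}), and Theorem~\ref{thm:cs}(a) asserts that almost* surely $u_n \to \Gamma_k$ in $\mathcal{D}'$. By the remark preceding Theorem~\ref{thm:cs}, $\mathcal{D}'$-convergence of a sequence of subharmonic functions is equivalent to $L^1_{\text{loc}}$-convergence, so almost* surely $u_n \to \Gamma_k$ in $L^1_{\text{loc}}(\mathbb{C})$. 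All subsequent deterministic assertions are then made at a fixed $\omega$ in this single full-measure event, which is key because the same event has to serve simultaneously for every $z_0$ and every compact $K$.

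For the pointwise bound (\ref{eq:upper}), I would fix $z_0 \in \mathbb{C}$ and $r > 0$ and start from the sub-mean-value inequality
\[ u_n(z_0) \leq \frac{1}{\pi r^2} \int_{B(z_0, r)} u_n(w)\, dA(w)~. \]
Passing to $\limsup_{n \to \infty}$ and invoking the $L^1_{\text{loc}}$-convergence yields
\[ \limsup_{n \to \infty} u_n(z_0) \leq \frac{1}{\pi r^2} \int_{B(z_0, r)} \Gamma_k(w)\, dA(w)~. \]
Since $\Gamma_k$ is itself subharmonic, the classical identity $\Gamma_k(z_0) = \lim_{r \to 0^+} (\pi r^2)^{-1} \int_{B(z_0, r)} \Gamma_k \, dA$ lets me conclude (\ref{eq:upper}) by sending $r \to 0$.

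For the uniform bound (\ref{eq:unif}), assume $\Gamma_k$ is continuous on the compact set $K$, fix $\epsilon > 0$, and choose a compact neighbourhood $K' \supset K$ together with $r > 0$ small enough that $B(z, r) \subset K'$ for every $z \in K$ and $|\Gamma_k(w) - \Gamma_k(z)| < \epsilon$ whenever $z \in K$ and $w \in B(z, r)$ (possible by uniform continuity of $\Gamma_k$ on $K'$). Splitting $u_n = \Gamma_k + (u_n - \Gamma_k)$ inside the sub-mean-value integrand gives, uniformly in $z \in K$,
\[ u_n(z) \leq \frac{1}{\pi r^2} \int_{B(z,r)} \Gamma_k \, dA + \frac{1}{\pi r^2} \|u_n - \Gamma_k\|_{L^1(K')} \leq \Gamma_k(z) + \epsilon + \frac{1}{\pi r^2} \|u_n - \Gamma_k\|_{L^1(K')}~. \]
The $L^1_{\text{loc}}$-convergence drives the last term to zero, whence $\limsup_{n \to \infty} \sup_{z \in K}(u_n(z) - \Gamma_k(z)) \leq \epsilon$, and letting $\epsilon \to 0$ finishes the argument.

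No serious obstacle arises here: the substantive work has all been absorbed into the $\mathcal{D}'$-convergence of Theorem~\ref{thm:cs}(a), and the corollary is then a soft consequence in the spirit of \cite[Theorem 3.2.13]{Hor}. The only detail that warrants a moment of care is the already-emphasised point that a \emph{single} almost* sure event underpins the $L^1_{\text{loc}}$-convergence, so that the pointwise assertion (\ref{eq:upper}) and the family of uniform assertions (\ref{eq:unif}) indexed by compact sets of continuity of $\Gamma_k$ all hold on the same event with no further measure-theoretic book-keeping.
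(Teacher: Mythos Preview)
Your approach is exactly the paper's: the paper simply cites \cite[Theorem~3.2.13]{Hor} after establishing the $\mathcal D'$-convergence in Theorem~\ref{thm:cs}(a), and you have unpacked the proof of that theorem via the sub-mean-value inequality and $L^1_{\mathrm{loc}}$-convergence. The argument for (\ref{eq:upper}) is correct as written.

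There is one small slip in the uniform part. You claim $|\Gamma_k(w)-\Gamma_k(z)|<\epsilon$ for $z\in K$, $w\in B(z,r)$ ``by uniform continuity of $\Gamma_k$ on $K'$'', but the hypothesis only gives continuity of $\Gamma_k$ restricted to $K$; nothing prevents $\Gamma_k$ from being discontinuous at points of $K'\setminus K$ arbitrarily close to $K$, so uniform continuity on $K'$ is not available. Fortunately your estimate only requires the one-sided bound $\Gamma_k(w)\le\Gamma_k(z)+\epsilon$ (to control the ball average from above), and this \emph{does} hold uniformly: $\Gamma_k$ is upper semicontinuous on all of $\mathbb C$ (being subharmonic), so if the bound failed along sequences $z_n\in K$, $w_n$ with $|w_n-z_n|\to 0$ and $\Gamma_k(w_n)>\Gamma_k(z_n)+\epsilon$, then after passing to a subsequence with $z_n\to z_\infty\in K$ (hence $w_n\to z_\infty$) upper semicontinuity and continuity of $\Gamma_k|_K$ would give $\Gamma_k(z_\infty)\ge\limsup\Gamma_k(w_n)\ge\lim\Gamma_k(z_n)+\epsilon=\Gamma_k(z_\infty)+\epsilon$, a contradiction. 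With this patch the proof is complete and matches H\"ormander's.
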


\begin{qn} Does a similar bound hold for the individual singular values $s_j(\Phi_{n,\omega})$, $1 < j \leq W$?
\end{qn}

\begin{rmk} The relation (\ref{eq:unif}) clearly fails if $\Gamma_k$ is not continuous on $K$. See Damanik--Gan--Kr\"uger \cite{DGK} and references therein for examples where the Lyapunov exponent is discontinuous (for $W = 1$).
\end{rmk}

\paragraph{Subsequential Lyapunov behaviour} In the case of independent matrices (\ref{eq:assumiid}) with $W=1$ (and also in a more general setting, still assuming independence but allowing for more general dependence of the matrices on $z$), Gorodetski--Kleptsyn showed \cite{GorKl} that (\ref{eq:upper}) is in fact a pointwise equality: almost surely
\[ \forall E  \in  \mathbb R \, : \,  \limsup_{n \to \infty}    \frac1n \log s_1(\Phi_{n, \omega}(E))  =  \gamma_1(E) \]
(the non-real values of $z$ are in this case covered by Theorem~\ref{thm:outsidesp}).
The following theorem (proved in \cite{GS} although stated there in a less general setting), extends this result to arbitrary $W$ with an explicit description of the subsequences on which the convergence has to hold.
\begin{thm}[after \cite{GorKl,GS}]\label{thm:subseq}  Assume the large deviation estimate (\ref{eq:largedev}). For any sequence $N_n \to \infty$ such that $N_n / n \to \infty$ and $(\log N_n) / n \to 0$ one has  almost surely
\[ \forall E \in \mathbb R \, \: \, 
\lim_{n \to \infty} \min( \max_{1 \leq j \leq W}   | \frac1n \log s_j(\Phi_{n, \omega}(E))   -   \gamma_j(E)|, \max_{1 \leq j \leq W}   | \frac1{N_n} \log s_j(\Phi_{N_n, \omega}(E))   -   \gamma_j(E)|) = 0~.
\]
In particular, almost surely 
\begin{equation}\label{eq:limsup} \forall E \in \mathbb R\, : \,  \liminf_{n \to \infty}   \max_{1 \leq j \leq W}  | \frac1n \log s_j(\Phi_{n, \omega}(E))   -   \gamma_j(E)| = 0~. \end{equation}
\end{thm}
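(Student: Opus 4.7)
The plan is to combine a net argument with large deviations at both scales $n$ and $N_n$, in the spirit of Gorodetski--Kleptsyn~\cite{GorKl}. Fix $\epsilon>0$ and a compact $K=[-R,R]\subset\mathbb R$; set
\[
A_n(E)=\max_{1\le j\le W}\Bigl|\tfrac1n\log s_j(\Phi_{n,\omega}(E))-\gamma_j(E)\Bigr|,
\]
and let $B_n(E)$ denote the same quantity with $n$ replaced by $N_n$. By a countable intersection over $\epsilon=1/m$ and $R\in\mathbb N$, it suffices to prove that almost surely, for every $E\in K$ and every sufficiently large $n$, $\min(A_n(E),B_n(E))\le\epsilon$; the ``in particular'' clause (\ref{eq:limsup}) then follows immediately.

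I would first record polynomial-type deterministic bounds. With overwhelming probability, on $K$ one has $\|\Phi_{n,\omega}(E)\|\le e^{\alpha n}$ and $\|\partial_E\Phi_{n,\omega}(E)\|\le n e^{\alpha n}$ for a constant $\alpha=\alpha(R)$ (by Furstenberg--Kesten applied to $\log\|T_{k,\omega}\|$ and the moment assumption in (\ref{eq:assumiid})), so Weyl's inequality yields $|s_j(\Phi_n(E))-s_j(\Phi_n(E'))|\le n e^{\alpha n}|E-E'|$ for every singular index $j$. The hypothesis (\ref{eq:largedev}) applied to the wedge powers of $\Phi_n$ delivers LDT for each partial sum $\sum_{j=1}^k\log s_j$ and, by induction on $k$, for each individual $\log s_j$; it also makes each $\gamma_j$ continuous on $K$. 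The upper-bound directions of $A_n$ and $B_n$ then follow for free from Corollary~\ref{thm:cs-unif} applied to wedge powers, while the symplectic identity $s_{2W+1-j}=s_j^{-1}$ reduces the outstanding work to a pointwise lower bound on $\tfrac1n\log s_j$ or $\tfrac1{N_n}\log s_j$ for $1\le j\le W$.

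For this I would set up two complementary nets. Choose $\Lambda>0$ large (to be calibrated against $\epsilon$, $\alpha$, and $\gamma_W$) and place a net $\mathcal N_n\subset K$ of spacing $\delta_n=e^{-\Lambda n}$; thanks to $N_n/n\to\infty$, the union bound $|\mathcal N_n|\cdot 2W\cdot Ce^{-cN_n}=O(e^{\Lambda n-cN_n})$ is summable, and Borel--Cantelli makes every net point $\tfrac\epsilon2$-good at scale $N_n$ eventually. A coarser net of spacing $e^{-cn/2}$ is similarly $\tfrac\epsilon2$-good at scale $n$ by (\ref{eq:largedev}) and Borel--Cantelli. For any $E\in K$, one takes the nearest point in whichever net admits a valid Weyl interpolation: the scale-$N_n$ interpolation succeeds when $\Lambda n$ dominates $(\alpha-\gamma_W+\epsilon)N_n$, while the scale-$n$ interpolation on the coarser net succeeds when the coarse spacing is small enough relative to $e^{(\gamma_W-\epsilon)n}$. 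The joint hypotheses $N_n/n\to\infty$ and $(\log N_n)/n\to 0$ are exactly what allow $\Lambda$ (possibly slowly growing with $n$) to be chosen so that these two interpolation regimes jointly cover every $E\in K$ for $n$ large.

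The main obstacle is this final joint-coverage step: a naive single-scale argument only succeeds when the LDT rate $c$ exceeds $\alpha-\gamma_W$, whereas the two-scale construction exploits the growing gap $N_n/n$ to trade off the union-bound and interpolation budgets against each other without imposing any a priori bound on $c$. Once the calibration of $\Lambda$ and the two net spacings is arranged, Borel--Cantelli, Weyl perturbation, and the symplectic reduction close out the argument.
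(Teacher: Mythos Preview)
The paper does not include a self-contained proof of this theorem; it is attributed to \cite{GS} (see the sentence preceding the statement). So there is no proof in the paper to compare against directly, and I can only assess your outline on its own terms and against the method of the companion argument in Section~\ref{s:rho}.

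Your overall architecture (LDP on a net, Borel--Cantelli, Weyl interpolation, symplectic reduction to lower bounds) is reasonable and in the spirit of \cite{GorKl,GS}. The gap is precisely the step you yourself flag as ``the main obstacle'': the joint-coverage claim is asserted but not established. Concretely, both of your nets hit the same barrier. For the fine net (spacing $e^{-\Lambda n}$, LDP at scale $N_n$), summability of $e^{\Lambda n - cN_n}$ allows $\Lambda$ as large as $cN_n/n$, while Weyl interpolation at scale $N_n$ needs $\Lambda n \gtrsim (\alpha-\gamma_W)N_n$; together these force $c>\alpha-\gamma_W$. For the coarse net (spacing $e^{-cn/2}$, LDP at scale $n$), Weyl interpolation at scale $n$ needs $cn/2\gtrsim(\alpha-\gamma_W)n$, i.e.\ again $c>\alpha-\gamma_W$ (in fact $c>2(\alpha-\gamma_W)$). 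Since the interpolation thresholds are conditions on the parameters $c,\alpha,\gamma_W,\Lambda$ and \emph{not} on $E$, there is no dichotomy in $E$ that lets one net succeed where the other fails; either both work for all $E$ or neither does. The ``trade-off'' you describe does not materialise, and the hypothesis $(\log N_n)/n\to 0$ is never actually used in your sketch.

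The missing ingredient --- visible in the paper's proof of Theorem~\ref{thm:rho} and in \cite[Lemma~2.3]{GS} --- is the \emph{polynomial structure} of the transfer matrices: the entries of $\Phi_n^{\wedge k}$ are polynomials in $E$ of degree $O(n)$, so the scale-$n$ bad set $\{E:A_n(E)>\epsilon\}$ is a union of only $O(n^{C(W)})$ intervals, not exponentially many. This polynomial count (rather than a Lipschitz net) is what lets the union bound at scale $N_n$ close without any relation between $c$ and $\alpha-\gamma_W$; the condition $(\log N_n)/n\to 0$ then controls the number $O(N_n^{C(W)})$ of scale-$N_n$ intervals so that it remains subexponential in $n$. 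Your outline should be reworked around this counting argument rather than around Weyl perturbation from a metric net.
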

\begin{rmk}
If instead of (\ref{eq:largedev}) one assumes a large deviation estimate valid throughout the complex plane, then also the conclusion of Theorem~\ref{thm:subseq} will hold for all the complex (rather than just real) values of the parameter. 
\end{rmk}
\begin{qn} Does (\ref{eq:limsup}) hold in the general case (\ref{eq:assum})?
\end{qn}
From Theorem~\ref{thm:cs}-(a) and general properties of subharmonic functions (see \cite[Theorem 2.7.4.1.]{Az}) we have the following weaker property (proved, for $W = 1$, in \cite{Simon}):
\begin{cor}[after Simon]\label{thm:simon-corr} Assume (\ref{eq:assum}). Then  almost* surely  
\[ \forall 1 \leq k \leq W \quad \liminf_{n \to \infty}    | \frac1n \sum_{j=1}^k \log s_j(\Phi_{n, \omega}(z))   -   \Gamma_k(z)| = 0 \]
for all $z \in \mathbb C$ outside a set of zero logarithmic capacity.
\end{cor}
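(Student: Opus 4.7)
The plan is to derive Corollary~\ref{thm:simon-corr} directly from Theorem~\ref{thm:cs}-(a) by invoking the general potential-theoretic principle that $\mathcal{D}'$-convergence of subharmonic functions implies quasi-everywhere convergence along a subsequence. This is exactly the content of \cite[Theorem 2.7.4.1]{Az} cited in the statement, so the task is really to set up its hypotheses and then pass to the $\liminf$ conclusion by taking a finite union of polar sets.

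Fix $k \in \{1, \dots, W\}$ and set
\[ u_n^{(k)}(z) \;=\; \frac{1}{n} \sum_{j=1}^k \log s_j(\Phi_{n,\omega}(z))~. \]
As recalled in the discussion around (\ref{eq:subh1}), each $u_n^{(k)}$ is subharmonic on $\mathbb{C}$, and $\Gamma_k$ is subharmonic as well. By Theorem~\ref{thm:cs}-(a), almost* surely the sequence $u_n^{(k)}$ converges to $\Gamma_k$ in $\mathcal{D}'$. One should also note that the $u_n^{(k)}$ are locally uniformly bounded from above: this is immediate from the explicit bound $\|T_{n,\omega}(z)\| \leq 1 + |z| + \|V_{n,\omega}\|$ together with the log-integrability (\ref{eq:logint}), which yields $u_n^{(k)}(z) \leq k \log(1 + |z| + \frac{1}{n}\sum_j \|V_{j,\omega}\|)$, uniformly on compacta.

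Now invoke \cite[Theorem 2.7.4.1]{Az}: for a sequence of subharmonic functions converging to a (subharmonic) limit in $\mathcal{D}'$, one can extract a subsequence $n_m = n_m(\omega, k) \to \infty$ such that $u_{n_m}^{(k)}(z) \to \Gamma_k(z)$ for every $z$ outside a polar set $E_k$, i.e.\ a set of zero logarithmic capacity. On the complement of $E_k$ we then have $\lim_m |u_{n_m}^{(k)}(z) - \Gamma_k(z)| = 0$, which forces $\liminf_n |u_n^{(k)}(z) - \Gamma_k(z)| = 0$. Finally, since the finite union $E = \bigcup_{k=1}^W E_k$ of polar sets is again polar (logarithmic capacity is countably subadditive on Borel sets), we obtain the claimed conclusion simultaneously for all $k \in \{1, \dots, W\}$.

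The one step deserving some care is the appeal to \cite[Theorem 2.7.4.1]{Az}: one must check its hypotheses actually hold for our sequence, which reduces to the $\mathcal{D}'$-convergence provided by Theorem~\ref{thm:cs}-(a) and the local uniform upper bound noted above. Apart from this verification, everything is a packaging of standard facts, so I would not expect any genuine obstacle: the work has already been done in establishing Theorem~\ref{thm:cs} and in the cited potential-theoretic reference.
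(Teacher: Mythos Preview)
Your proposal is correct and matches the paper's own justification, which simply cites Theorem~\ref{thm:cs}-(a) together with \cite[Theorem 2.7.4.1]{Az}. One minor slip: the local upper bound you wrote, $u_n^{(k)}(z)\leq k\log\bigl(1+|z|+\tfrac1n\sum_j\|V_{j,\omega}\|\bigr)$, need not be uniform in $n$ under the mere log-integrability assumption (\ref{eq:logint}) (the averages $\tfrac1n\sum_j\|V_{j,\omega}\|$ may diverge); use instead the bound $u_n^{(k)}(z)\leq k\bigl(\tfrac1n\sum_m\log_+\|V_{m,\omega}\|+\log_+|z|+\log(4e)\bigr)$ from the proof of Theorem~\ref{thm:cs}, whose ergodic averages converge almost* surely by Birkhoff.
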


Note that a set of zero logarithmic capacity has zero $\rho$-Hausdorff measure for any $\rho$ satisfying (\ref{eq:condrho}) (see  Frostman \cite[\S 47]{Fr}). The opposite implication is not,  in general, true (see Carleson \cite{Carl}); however, Erd\H{o}s and Gillis \cite{EG} showed that any set with finite $(\log^{-1} (e/t))$-Hausdorff measure is of zero logarithmic capacity (the converse is also not true, see again \cite{Carl}).

\paragraph{Spectral-theoretic applications}
In the case $k=W$, Corollary~\ref{thm:simon-corr} can be augmented as follows (see Section~\ref{s:limsup} for the proof). Recall that a subspace $F \subset \mathbb R^{2W}$ (of dimension $W$) is called Lagrangian if 
\[ F = (JF)^\perp~, \quad \text{where} \quad J = \left( \begin{array}{cc} 0 & - \mathbbm 1 \\ \mathbbm 1 & 0 \end{array} \right)~. \]
\begin{thm}\label{thm:limsup} Assume (\ref{eq:assum}); fix a Lagrangian subspace $F \subset \mathbb R^{2W}$ and let $\pi_F: \mathbb R^{2W} \to F$ be the orthogonal projection. Then almost* surely 
\begin{equation}\label{eq:limsup1} \limsup_{n \to \infty}  \frac1n  \log s_W(\Phi_{n, \omega}(z) \pi_F^*)   \geq   \gamma_W(z)  \end{equation}
for all $z \in \mathbb C$ outside a set of zero logarithmic capacity.
\end{thm}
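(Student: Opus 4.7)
Let $\xi = v_1 \wedge \cdots \wedge v_W \in \Lambda^W(\mathbb R^{2W})$ be a unit Lagrangian multivector representing $F$, where $v_1,\ldots,v_W$ is an orthonormal basis of $F$. Set
\[ u_n(z) \;=\; \tfrac{1}{n} \log \bigl\|\Lambda^W \Phi_{n,\omega}(z)\, \xi\bigr\| \;=\; \tfrac{1}{n} \sum_{j=1}^W \log s_j\!\left(\Phi_{n,\omega}(z)\pi_F^*\right), \]
which is subharmonic in $z$ as the log-norm of an $\Lambda^W$-valued polynomial. With $\sigma_n^{(k)}(z) := \tfrac{1}{n}\sum_{j=1}^k \log s_j(\Phi_{n,\omega}(z))$, the elementary inequality $s_j(\Phi_n \pi_F^*) \leq s_j(\Phi_n)$ yields the sandwich
\begin{equation}\label{eq:plansand}
u_n \;\leq\; \sigma_n^{(W)}, \qquad \tfrac{1}{n}\log s_W(\Phi_n \pi_F^*) \;\geq\; u_n - \sigma_n^{(W-1)}.
\end{equation}

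The plan is to reduce the theorem to the claim that, almost* surely, $u_n \to \Gamma_W$ in $\mathcal D'$. Granted this, the Azarin-type theorem \cite[Thm 2.7.4.1]{Az} used in the proof of Corollary~\ref{thm:simon-corr} gives $\liminf_n |u_n - \Gamma_W| = 0$ outside a set $E_0$ of zero logarithmic capacity. Combined with the pointwise bound $\limsup_n u_n \leq \limsup_n \sigma_n^{(W)} \leq \Gamma_W$ from Corollary~\ref{thm:cs-unif}, this upgrades to $\limsup_n u_n(z) = \Gamma_W(z)$ on $\mathbb C \setminus E_0$. Applying Corollary~\ref{thm:cs-unif} once more with $k = W-1$ to obtain $\limsup_n \sigma_n^{(W-1)}(z) \leq \Gamma_{W-1}(z)$ pointwise, and using $\limsup(a_n - b_n) \geq \limsup a_n - \limsup b_n$, the second part of \eqref{eq:plansand} delivers
\[ \limsup_n \tfrac{1}{n}\log s_W(\Phi_n\pi_F^*)(z) \;\geq\; \Gamma_W(z) - \Gamma_{W-1}(z) \;=\; \gamma_W(z), \qquad z \in \mathbb C \setminus E_0, \]
which is the conclusion of the theorem.

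To establish the $\mathcal D'$-convergence claim, the first inequality in \eqref{eq:plansand} together with Theorem~\ref{thm:cs}(a) makes $\{u_n\}$ precompact in $L^1_{\mathrm{loc}}$, so it suffices to show that every $L^1_{\mathrm{loc}}$-subsequential limit equals $\Gamma_W$. For $z \in \mathbb C \setminus \mathbb R$ the cocycle $\Phi_{n,\omega}(z)$ is exponentially hyperbolic; the Kotani-type gap $\gamma_W(z) > 0$ on the non-real part of the plane produces a $\Phi$-invariant measurable splitting $\mathbb C^{2W} = E^+(z,\omega)\oplus E^-(z,\omega)$ into complex Lagrangian subspaces along which $\Lambda^W \Phi_n$ grows, respectively decays, at the rates $\pm\Gamma_W(z)$. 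For the fixed real Lagrangian $F$, almost surely the complexification $F\otimes\mathbb C$ is transverse to $E^-(z,\omega)$, so its wedge representative has a non-zero component along $\Lambda^W E^+(z,\omega)$, whence $u_n(z,\omega) \to \Gamma_W(z)$ almost surely for each such $z$. By Fubini, almost* surely $u_n(\cdot,\omega) \to \Gamma_W$ pointwise almost everywhere on $\mathbb C$ (since $\mathbb R$ has planar measure zero), forcing every $L^1_{\mathrm{loc}}$-subsequential limit of $u_n$ to coincide with the subharmonic function $\Gamma_W$.

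The main obstacle I expect is this hyperbolic-splitting step in the matrix case $W > 1$ under only the general ergodic assumption (\ref{eq:assum}). Without simplicity of the Lyapunov spectrum (which requires extra input such as (\ref{eq:assumiid})) a complete Oseledets decomposition is unavailable, so the two Lagrangian subbundles $E^\pm$ must be constructed from just the single top/bottom gap $\gamma_W > 0 > -\gamma_W$ on $\mathbb C\setminus\mathbb R$. The almost-sure transversality of the deterministic Lagrangian $F\otimes\mathbb C$ to the random $E^-(z,\omega)$ similarly needs a separate argument; one natural avenue is to exploit the analytic dependence of $E^-$ on $z$ together with the fact that Lagrangians non-trivially intersecting a prescribed one form a proper subvariety of the Lagrangian Grassmannian, so that the exceptional $z$-set is discrete and thus negligible for the $L^1_{\mathrm{loc}}$ argument.
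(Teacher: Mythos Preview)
Your overall strategy---define the subharmonic functions $u_n(z)=\tfrac1n\sum_{j=1}^W\log s_j(\Phi_{n,\omega}(z)\pi_F^*)$, establish that almost* surely $u_n\to\Gamma_W$ in $\mathcal D'$ via pointwise convergence on $\mathbb C\setminus\mathbb R$, and then combine with the Craig--Simon bound for $k=W-1$---is essentially the paper's approach. The paper packages the subharmonic machinery slightly differently (Cartan's regularisation of $\sup_{n'\geq n}u_{n'}$ followed by the monotone limit, in place of your $\mathcal D'$-convergence plus the Azarin theorem), but this is a cosmetic difference.

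The substantive point is the gap you yourself flag: transversality of $F$ to the contracting bundle $F^-(z)$ for non-real $z$. The paper's resolution is much cleaner than the analytic-variety argument you sketch, and it is worth knowing. If $F\cap F^-(z)$ contained a non-zero vector $v=\binom{\psi(1)}{\psi(0)}$, then the associated formal solution of $(H-z)\psi=0$ would be square-summable (since $\|\Phi_n(z)v\|$ decays exponentially), hence $z$ would be an eigenvalue of the self-adjoint operator $H_\omega^F$ with boundary condition $F$. For $z\notin\mathbb R$ this is impossible. This argument is deterministic in $\omega$ (once $H_\omega^F$ is self-adjoint, i.e.\ almost* surely) and works simultaneously for all non-real $z$, which is exactly what is needed for the ``almost* surely'' conclusion; a Fubini argument of the kind you propose would only give the almost-sure statement under (\ref{eq:assum}), not the uniform statement under (\ref{eq:assum-ue}). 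Note also that no simplicity of the Lyapunov spectrum is needed: the splitting $F^+(z)\oplus F^-(z)$ comes from the uniform hyperbolicity of the cocycle off the real axis (this is the content of Theorem~\ref{thm:outsidesp}), not from Oseledets.

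One minor technical point: the upper bound $u_n\leq\sigma_n^{(W)}$ alone does not give precompactness of $(u_n)$ in $\mathcal D'$; you also need the sequence not to escape to $-\infty$. Once you have pointwise convergence $u_n(z)\to\Gamma_W(z)$ at any single non-real $z$ (from the transversality argument), this is immediate, so just reorder the presentation accordingly.
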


\noindent Theorem~\ref{thm:limsup} implies the following result, established by Simon \cite{Simon} in the special case $W=1$ (see further Poltoratski--Remling \cite[Theorem 4.2]{PR}  for a generalisation to the  non-ergodic setting):
\begin{cor}[after Simon]\label{cor:sim+}
Assume (\ref{eq:assum}). Then almost* surely there exists a set $Q_{\omega}$ of zero logarithmic capacity (and, in particular, $\mes_\rho Q_\omega = 0$  for any gauge function $\rho$ satisfying (\ref{eq:condrho})) such that  all the spectral measures of $H_\omega$ are almost* surely supported on the union $Q_{\omega} \cup \{\gamma_W = 0\}$.
\end{cor}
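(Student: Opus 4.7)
The plan is to combine Theorem~\ref{thm:limsup} with a matrix-valued Schnol-type theorem. The operator $H_\omega$ corresponds to the Dirichlet boundary condition $\psi(0) = 0$, so any formal solution of $H_\omega \psi = E\psi$ is uniquely determined by $\psi(1) \in \mathbb R^W$ and satisfies $(\psi(n+1), \psi(n))^T = \Phi_{n,\omega}(E) \pi_F^* w$, where $w = \psi(1)$ and $F = \mathbb R^W \times \{0\}$ is a Lagrangian subspace of $\mathbb R^{2W}$. I apply Theorem~\ref{thm:limsup} to this $F$ to produce, almost* surely, a set $Q_\omega$ of zero logarithmic capacity on whose complement
\[ \limsup_{n \to \infty} \frac{1}{n} \log s_W(\Phi_{n,\omega}(E) \pi_F^*) \geq \gamma_W(E)~. \]

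Next, I invoke a Schnol-type theorem for matrix-valued Jacobi operators: for every spectral measure $\mu$ of $H_\omega$, $\mu$-almost every $E \in \mathbb R$ admits a polynomially bounded generalized eigenfunction, i.e.\ there is $w_E \in F$ with $\|w_E\| = 1$ such that $\|\Phi_{n,\omega}(E) \pi_F^* w_E\| \leq C_E (1+n)^{k_E}$. From the identity
\[ s_W(\Phi_{n,\omega}(E) \pi_F^*) = \min_{w \in F,\, \|w\| = 1} \|\Phi_{n,\omega}(E) \pi_F^* w\|~, \]
one obtains $\limsup_{n \to \infty} \frac{1}{n} \log s_W(\Phi_{n,\omega}(E) \pi_F^*) \leq 0$ at each such $E$. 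Combining this with the lower bound of Theorem~\ref{thm:limsup}, any $E \notin Q_\omega$ in the $\mu$-support must satisfy $\gamma_W(E) \leq 0$; since the symplectic structure forces $\gamma_W \geq 0$, one in fact has $\gamma_W(E) = 0$ at such $E$. Therefore every spectral measure $\mu$ is supported on $Q_\omega \cup \{\gamma_W = 0\}$, and the $\mes_\rho$-statement follows from the standard comparison between sets of zero logarithmic capacity and $\rho$-Hausdorff measure for $\rho$ satisfying (\ref{eq:condrho}) (recalled after Corollary~\ref{thm:simon-corr}).

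The main technical point is the matrix-valued Schnol theorem itself. The classical scalar proof, based on a trace-class embedding of $\ell_2(\mathbb N)$ into a weighted $\ell_2$ with weight $(1+n)^{-1-\epsilon}$, extends directly to matrix-valued Jacobi operators of the form $H_\omega$ and yields generalized eigenfunctions of polynomial growth rate $(1+n)^{1/2+\epsilon}$ for $\mu$-a.e.\ $E$. A minor point worth noting is that a single $Q_\omega$ works for all spectral measures simultaneously because it is defined intrinsically in terms of the cocycle $\Phi_{n,\omega}$; the $\mu$-null exceptional set produced by Schnol's theorem is simply absorbed into the $\mu$-null complement of $Q_\omega \cup \{\gamma_W = 0\}$, causing no difficulty.
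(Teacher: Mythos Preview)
Your proof is correct and follows essentially the same route as the paper: choose the Lagrangian $F=\mathbb R^W\times\{0\}$ corresponding to the Dirichlet condition, use Schnol's lemma (the paper cites \cite{Han}) to see that spectral measures are carried by $\{E:\limsup_n n^{-1}\log s_W(\Phi_{n,\omega}(E)\pi_F^*)\leq 0\}$, and combine with Theorem~\ref{thm:limsup} to land in $Q_\omega\cup\{\gamma_W=0\}$. Your write-up simply spells out in more detail the matrix Schnol argument and the r\^ole of $\gamma_W\geq 0$, which the paper leaves implicit.
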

Indeed, let $F = \left\{ \binom{v}{0} \, : \, v \in \mathbb R^n\right\}$; by Schnol's lemma (see e.g.\ \cite{Han}) all the spectral measures of $H_\omega$ are supported on the set of energies $E$ such that
\[\limsup_{n \to \infty}   \frac1n \log s_W(\Phi_{n, \omega}(z) \pi_F^*)  = 0~, \]
which, by Theorem~\ref{thm:limsup}, is contained  in the union of $\{\gamma_W = 0\}$ and a set of zero logarithmic capacity.

\medskip 
The result of Simon \cite{Simon} (the $W=1$ case of Corollary~\ref{cor:sim+}) strengthens and generalises the following  results. Jitomirskaya and Last \cite{JL} and Jitomirskaya \cite{Ji} showed for specific classes of almost periodic operators with positive Lyapunov exponent that the spectral measures are supported on a set of zero Hausdorff dimension. In an unpublished thesis, Landrigan \cite{Lan} showed that this set can be chosen to have zero $\rho$-Hausdorff measure for  $\rho(t) = \log^{-1-\epsilon}(e/t)$ (satisfying (\ref{eq:condrho})). Landrigan and Powell \cite{LP} showed that the same is true for $\rho(t) = \log^{-1}(e/t) \log^{-1-\epsilon} \log (e^e /t)$ (still satisfying (\ref{eq:condrho})); their work applies to general (not necessarily ergodic) operators. All of these results pertain to $W = 1$.

\begin{qn} For which gauge functions $\rho(t)$ can one find an  operator $H_\omega$ as in (\ref{eq:assum}) such that $\gamma_W > 0$ on $S$ and yet almost surely the spectral measures are not supported on any set $Q$ with $\mes_\rho Q = 0$? In particular, what can be said in the case  $\rho(t) = \log^{-1}(e/t)$?\end{qn}

In the unbounded, non-ergodic setting, $W=1$, Landrigan and Powell \cite[Theorem 2.4 (ii)]{LP} constructed operators for which any $Q$ supporting the spectral measure satisfies $\mes_\rho Q  > 0$ when $\rho(t)$ is arbitrarily close to $\log^{-1}(e/t)$ (e.g.\ $\rho(t) = \log^{-1+\epsilon}(e/t)$).  

\paragraph{Acknowledgement} We are grateful to Anton Gorodetski and to Barry Simon for helpful correspondence.

\section{Proof of Theorem~\ref{thm:liminf}}\label{s:liminf}

We start from the case when $\mathbf F$ is bounded, so that $K_E = \sup_n \|T_n(E)\| < \infty$.

Assume that $\liminf\limits_{n \to \infty} p_n s_W(\Phi_n(E)) > 0$ for all $E \in (a, b)$, $S \cap (a,b) \neq \varnothing$. For each $E \in (a,b)$, let $F^-_n(E) \subset \mathbb R^{2W}$ be the subspace spanned by the right singular vectors of $\Phi_n(E)$ (i.e.\ the eigenvectors of $\Phi_n(E)^*\Phi_n(E)$) corresponding to the $W$ smallest singular values. In case of degeneracy, we can still construct $F^-_n(E)$ to be Lagrangian and continuously dependent on $E$. By assumption, the singular values corresponding to the singular vectors in $F_n^-(E)$  lie in $(0, 1)$ for $n \geq n_0(E)$.

We claim that $F^-_n(E)$ converges, as $n \to \infty$, to a Lagrangian subspace $F^-(E)$. Indeed, let $\alpha_n$ be the largest principal angle between $F^-_n(E)$ and $F^-_{n+1}(E)$. 
For $n \geq n_0(E)$, there exist a unit vector $u_{n+1} \in F^-_{n+1}(E)$, and a pair of unit vectors $v_n^- \in F^-_n(E)$ and $v_n^+ \in F^-_n(E)^\perp$ so that
\[ u_{n+1} = \sin \alpha_n \, v_n^+ + \cos \alpha_n \,  v_n^-~. \]
Then, for 
\[ C_E = \left[ \inf_{n} p_n s_W(\Phi_n(E)) \right]^{-1}~,\]
we have (using for the third inequality below that the images of $v_n^\pm$ under $\Phi_n(E)$ are orthogonal):
\begin{equation}\label{eq:oseled} C_E p_{n+1} \geq \| \Phi_{n+1}(E) u_{n+1}\| \geq \frac{1}{K_E} \| \Phi_{n}(E) u_{n+1}\| 
\geq \frac{\sin \alpha_n}{K_E} \| \Phi_{n}(E)   \, v_n^+ \|  \geq \frac{\sin \alpha_n}{K_E \, C_E \, p_n}~, \end{equation}
whence by the Cauchy--Schwarz inequality 
\[ \sum_n \sin \alpha_n \leq K_E C_E^2 \sum p_n p_{n+1} \leq  K_E C_E^2 \sum p_n^2 < \infty~.\]
 Thus  $F^-_n(E)$ form a Cauchy sequence in the Lagrangian Grassmanian and converge to  a limit $F^-(E)$. Note that for each $u \notin F^-(E)$ we have $\liminf p_n \|\Phi_n(E) u\| >0$.

By Baire's continuity theorem (a corollorary of Baire's category theorem) $F^-(E)$ is continuous on a dense subset of $(a,b)$ (since it is a pointwise limit of a sequence of continuous functions). We now show that this is impossible.

Along with $H_\omega$, consider the family of self-adjoint operators $H_\omega^F$ associated with Lagrangian subspaces $F \subset \mathbb R^{2W}$: the operator $H_\omega^F$ acts on the space 
\[ \mathcal H_F = \left\{ \psi \in \ell_2(\mathbb Z_+ \to \mathbb C^W)~, \, \binom{\psi(1)}{\psi(0)} \in F \right\} \]
via
\[ (H_\omega^F \psi)(n) =  
\psi(n+1) + V_{n,\omega} \psi(n) + \psi(n-1)~, \quad n \geq 1~;\] 
$(H_\omega^F \psi)(0)$ is defined so that $H_\omega^F \psi \in \mathcal H_F$ (see Atkinson \cite{Atk} for details).
For almost* every $\omega$, all the operators $H_\omega^F$ are self-adjoint and have the same essential spectrum $S$. 

Fix a sequence $\tau_n \to +\infty$ such that $\sum_n \tau_n^2 p_n^2 < \infty$. By Schnol's lemma (see e.g.\ Han \cite{Han} for an argument applicable in the current situation), for almost every $E$ with respect to the spectral measure of $H_\omega^F$ (and in particular on a dense subset of $S \cap (a,b)$) there exists a formal solution $\psi_n^{F,E} \not\equiv 0$ of the equation $H_F \psi  = E \psi$ such that $\|\psi^{F,E}(n)\| \leq \tau_n^{-1} p_n^{-1}$. In particular,
\[ \|\Phi_n(E) \binom{\psi^{F,E}(1)}{\psi^{F,E}(0)} \| \leq 2 \tau_n^{-1} p_n^{-1} = o(p_n^{-1})~.\]
If $E$ lies in $(a,b)$,   this implies that $\binom{\psi^{F,E}(1)}{\psi^{F,E}(0)}  \in F^-(E)$, i.e.\ $F^-(E) \cap F \neq \{0\}$. 

For each $F$, this  holds on a dense subset of $S \cap (a,b)$. This implies that $F^-(E)$ is not continuous at any point of $S \cap (a,b)$: indeed, if $F^-(E)$ were continuous at $E_0$, the property would fail for $F = F(E_0)^\perp$. This contradiction shows that the set (\ref{eq:nonlyap-liminf}) is dense in $S$.  

This set is also $G_\delta$ in $S$, as its complement in $S$ is equal to the $F_\sigma$ set
\[ \bigcup_{m \geq 1} \bigcup_{n_0 \geq 1} \bigcap_{n \geq n_0} \left\{ E \in S  \, : \,  s_W(\Phi_n(E))  \geq \frac{1}{m p_n} \right\}~.\]
This completes the proof of item (b).

Now consider the general case, when $\mathbf F: \Omega \to \mathbb R$ is only assumed to satisfy (\ref{eq:logint}). We claim that the preceding arguments still apply to the sequence $p_n = e^{\epsilon n}$. Indeed, (\ref{eq:logint}) implies that
\[\sum_{n=1}^\infty \mathbb P \left\{ |\mathbf F| \geq e^{\epsilon n} \right\} \leq   \frac1\epsilon \int_0^\infty \mathbb P \left\{ \mathbf F \geq e^{r} \right\} dr < \infty~,\] 
whence almost surely 
\[ \limsup_{n \to \infty} \left[ |\mathbf F(T^n \omega)| e^{-\epsilon n} \right] \leq 1\]
by the first Borel--Cantelli lemma. Set
\[ K_E' =  \sup_{n} \|T_n(E)\| e^{-\epsilon n} \leq \sup_{n} \left[ |\mathbf F(T^n \omega)| e^{-\epsilon n}  \right] + |E| + 1  ~,\]
then instead of  (\ref{eq:oseled}) we have:
\[C_E p_{n+1}  \geq \frac{\sin \alpha_n}{K_E' \, C_E \, p_n e^{\epsilon n}}~, \]
whence 
\[ \sin \alpha_n \leq K_E' C_E^2 p_{n} p_{n+1} e^{\epsilon n} \leq K_E' C_E^2 e^{-\epsilon n}~. \]
From this point the proof proceeds as in case (b). \qed

\section{Proof of Theorem~\ref{thm:cs}}\label{s:cs}

The functions 
\[ \Gamma_{k, n}(z) = \frac{1}{n} \mathbb E \sum_{j=1}^k \log s_j(\Phi_{n,\omega}(z)) \]
converge pointwise to $\Gamma_k(z) = \sum_{j=1}^k \gamma_j(z)$. We claim that the sequence $(\Gamma_{k, n})_{n \geq 1}$ is precompact in $\mathcal D'$. Indeed, recall  (see e.g.\ H\"ormander \cite[Theorem 3.2.12]{Hor}) that a sequence $(u_n)$ of subharmonic functions is precompact  if for any $R > 0$
\begin{equation}\label{eq:upperbd} \limsup_{n\to\infty} \sup_{z \in B(0, R)} u_n(z) < \infty\end{equation}
and for some $r > 0$
\begin{equation}\label{eq:notminusinf}\liminf_{n \to \infty} \sup_{z \in B(0, r)} u_n(z) > -\infty~.\end{equation}
For our functions $\Gamma_{k,n}$, (\ref{eq:notminusinf}) follows from the pointwise inequality $\Gamma_{k,n} \geq 0$, while (\ref{eq:upperbd}) follows from the crude estimate
\[ \Gamma_{k, n}(z) \leq k\Gamma_{1,n}(z) \leq \frac k n \mathbb E \log \|\Phi_{n,\omega}(z)\| \leq  k \mathbb E \log \|T_{1,\omega}(z)\| \leq k (\mathbb{E} \log_+ \|V_{1, \omega}\| + \log_+ |z| + \log (4e))~.\]
Any limit point of $\Gamma_{k,n}$ in $\mathcal D'$ has to coincide with $\Gamma_k$, hence $\Gamma_{k,n} \to \Gamma_k$ in $\mathcal D'$.

Now consider the functions 
\[ \Gamma_{k, n,\omega}(z) = \frac{1}{n}  \sum_{j=1}^k \log s_j(\Phi_{n,\omega}(z))~.\]
As above, the property (\ref{eq:notminusinf}) still follows from $\Gamma_{k,n,\omega} \geq 0$. We claim that (\ref{eq:upperbd}) holds almost* surely. Indeed,  
\[\Gamma_{k,n,\omega}(z)\leq k ( \frac{1}{n} \sum_{m=1}^n \log_+ \|V_{m, \omega}\| + \log_+ |z| + \log (4e))~, \]
and 
\[ \frac{1}{n} \sum_{m=1}^n \log_+ \|V_{m, \omega}\|  \to \mathbb E \log_+ \|V_{1, \omega}\| \]
almost* surely by the Birkhoff ergodic theorem. Thus $\Gamma_{k,n, \omega} \to \Gamma_k$ in $\mathcal D'$ almost* surely. This concludes the proof of the first item  of the theorem.

The proof of the second item requires a more careful analysis of the transfer matrices. Denote 
\[ M_j(R) = \frac{1}{2\pi} \int_0^{2\pi}  \gamma_j(Re^{i\theta}) d\theta~. \]
We claim that
\begin{equation} \label{eq:loggrowth} \log R - o(1) \leq M_W(R) \leq M_1(R) \leq \log R + o(1)~, \quad R \to +\infty~.\end{equation}
Indeed, by the first item of the theorem and a general property of sequencess of subharmonic functions converging in $\mathcal D'$ topology (see \cite[Theorem 2.7.1.2]{Az}) we have almost surely (for any fixed $R>0$):
\[ M_j(R) = \lim_{n \to \infty} \frac{1}{2\pi} \int_0^{2\pi} \frac1n \log s_j(\Phi_{n,\omega}(Re^{i\theta})) d\theta~. \]
For the upper bound in (\ref{eq:loggrowth}), observe that
\[ \| T_{m,\omega}(R e^{i\theta}) \| \leq R + \| V_{m,\omega} \| + 1 = R \, ( 1 + \frac{\| V_{m,\omega}\|+1}{R})~,\]
whence for $R \geq 1$:
\[\begin{split} 
\| \Phi_{n, \omega} (Re^{i\theta})\| 
&\leq R^n \prod_{m=1}^n (1 + \frac{\|V_{m,\omega}\|+1}{|R|}) \\
&\leq R^n \exp(\frac{1}{R} \sum_{m=1}^n (\|V_{m,\omega}\| +1)\mathbbm 1_{\|V_{m,\omega}\| \leq R}
+ \sum_{m=1}^n \log \frac{3\|V_{m,\omega}\|}{R} \mathbbm 1_{\|V_{m,\omega}\| > R}) \\
&\leq R^n \exp(\frac{Cn}{R} + \frac{1}{R} \sum_{m=1}^n \|V_{m,\omega}\|\mathbbm 1_{\|V_{m,\omega}\| \leq R}
+ \sum_{m=1}^n \log \frac{\|V_{m,\omega}\|}{R} \mathbbm 1_{\|V_{m,\omega}\| > R})~.
\end{split}\]
As $n \to \infty$, we have almost surely (and independently of the argument $\theta$), for, say, $R \geq e$:
\[ \begin{split}
&\lim_{n \to \infty} \frac1{n} \sum_{m=1}^n  \|V_{m,\omega}\| \mathbbm{1}_{\|V_{m,\omega}\leq R}= \mathbb E \|V_{1,\omega}\| \mathbbm{1}_{\|V_{1,\omega}\|\leq R} \leq C +\frac{R}{\log R}  \mathbb{E} \log_+ \|V_{1,\omega}\| ~,\\
&\lim_{n \to \infty} \frac1n \sum_{m=1}^n \log_+  \|V_{m,n}\| \mathbbm 1_{\|V_{m,\omega}\| > R}= \mathbb E \log_+ \| V_{1, \omega}\| \mathbbm 1_{\|V_{1, \omega}\| \geq R}~,\end{split}\]
whence 
\[ M_1(R) \leq \sup_{\theta} \gamma_1(Re^{i\theta}) \leq \log R + O(1/R) + O(1/\log R) + o(1) = \log R + o(1)~, \quad R \to 
+ \infty~.\]
For the lower bound, we consider two cases. For $|\sin \theta| \geq 1/\sqrt R$, the cone 
\[ \left\{ \binom a b \in \mathbb C^{2W} \, : \, \|b \| \leq \frac 2 {\sqrt R} \|a\| \right\} \]
is preserved by any matrix 
\[\left( \begin{array}{cc} Re^{i\theta} -V & -\mathbbm 1 \\ \mathbbm 1 & 0 \end{array} \right)~, \]
and the norm of the first co\"ordinate is multiplied by a factor which is lower-bounded by 
\[ \begin{cases}
R - 2\sqrt R~, &\|V\|\leq \sqrt R\\
\frac12 R |\sin \theta|~, &\|V\| \geq \sqrt R~.
\end{cases}\]
Thus (almost surely)
\begin{equation}\label{eq:est-a}\begin{split}
\frac1n  \log s_W( \Phi_{n, \omega} (Re^{i\theta}) ) 
&\geq 
\frac 1 n \sum_{m=1}^n \left[ \log(R - 2\sqrt R) \mathbbm 1_{\|V_{m,\omega}\| \leq \sqrt R} + \log (\frac R 2 |\sin \theta|)  \mathbbm 1_{\|V_{m,\omega}\| > \sqrt R}\right] \\
&=\log R - O(\frac{1}{\sqrt R}) + \log \frac{|\sin \theta|}{2} o(1) \end{split}\end{equation}
as $R \to \infty$, with the $O(\cdot)$ and $o(\cdot)$ terms uniform in $\theta$. 

In the complementary case $|\sin \theta|< 1/\sqrt R$, the $W$-th singular value of $\Phi_{n,\omega}$ is estimated as follows. Let $G_z(m, n) = G_z[H_\omega](m,n)$ denote the $(m,n)$ $W\times W$ block of the block matrix representing the resolvent $(\tilde H_\omega - z)^{-1}$ of the full-line operator $\tilde H_\omega$ (acting on $\ell_2(\mathbb Z)$). Then 
\[ s_W(\Phi_{n, \omega} (Re^{i\theta}) ) 
\geq \left\| \binom{G_{Re^{i\theta}}(1, n+1)}{G_{Re^{i\theta}}(1, n)} \right\|^{-1} \left\| \binom{G_{Re^{i\theta}}(1, 1)}{G_{Re^{i\theta}}(1, 0)} \right\|~.\]
Now, the Combes-Thomas bound yields 
\[ \left\| \binom{G_{Re^{i\theta}}(1, n+1)}{G_{Re^{i\theta}}(1, n)} \right\|^{-1} \geq c  R |\sin \theta| \exp(c n \log(1 + R|\sin \theta|))~,\]
hence
\begin{equation}\label{eq:est-b} \mathbb E \log s_W(\Phi_{n, \omega} (Re^{i\theta}) )  
\geq  \log R + \log |\sin \theta| + c n \log(1 + R|\sin \theta|) - \operatorname{const}~.\end{equation}
Integrating (\ref{eq:est-a})--(\ref{eq:est-b})  over $\theta$ and letting $n \to \infty$, we obtain: $M_W(R) \geq \log R + o(1)$, as claimed.
This concludes the proof of (\ref{eq:loggrowth}). Now the second item of the theorem follows from this estimate and the next two standard lemmata, which we prove below for completeness. 
\qed

\begin{lemma}\label{l:logpot} Let $u: \mathbb C \to \mathbb R$ be a subharmonic function such that 
\[ M_u(R) \overset{\operatorname{def}}= \frac{1}{2\pi} \int_0^{2\pi} u(Re^{i\theta}) d\theta = \log R + o(1)~, \quad R \to +\infty~.\]
Then there exists a probability measure $\mu$ on $\mathbb C$ with
\[ \int \log_+ |z| d\mu(z) < \infty~, \quad u(z) = \int \log |z-z'| d\mu(z')~.\]
\end{lemma}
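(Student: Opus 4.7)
The plan is to show that the Riesz measure $\mu := \tfrac{1}{2\pi}\Delta u$ of $u$ is a probability measure with finite logarithmic moment, and then identify $u$ with its logarithmic potential. By Jensen's formula for subharmonic functions,
\[
M_u(R) - M_u(r) = \int_r^R \mu(\bar B(0,s))\,\frac{ds}{s}, \qquad 0 < r < R,
\]
so $\log R \mapsto M_u(R)$ is convex and non-decreasing with right derivative $\mu(\bar B(0,R))$. Convexity together with the assumption $M_u(R) = \log R + o(1)$ forces $\mu(\bar B(0,R)) \nearrow 1$, i.e.\ $\mu$ is a probability measure. Rearranging the same identity yields $\int_1^\infty (1-\mu(\bar B(0,s)))\,s^{-1}\,ds < \infty$, and Fubini converts this into $\int \log_+|z'|\,d\mu(z') < \infty$.

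Given this moment bound, the canonical potential $v(z) := \int \log|z-z'|\,d\mu(z')$ is well-defined and subharmonic on $\mathbb C$, with the same Riesz measure $\mu$ as $u$. The explicit identity $\frac{1}{2\pi}\int_0^{2\pi}\log|Re^{i\theta}-z'|\,d\theta = \log\max(R,|z'|)$ together with dominated convergence (the integrand being bounded in absolute value by $\log_+|z'|$) gives
\[
M_v(R) = \log R + \int_{|z'|>R}\log(|z'|/R)\,d\mu(z') = \log R + o(1).
\]
Consequently $h := u - v$ is harmonic on $\mathbb C$ (same Riesz measure) and $M_h(R) = o(1)$; the mean value property for harmonic functions then forces $M_h(R) \equiv h(0)$, so $h(0) = 0$.

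The main obstacle is the final step: upgrading $M_h \equiv 0$ to $h \equiv 0$. This is not automatic from the circle-average hypothesis alone, as one sees from $u(z) = v(z) + \operatorname{Re} z$ with $v$ a logarithmic potential. The extra input I would use is the pointwise growth information coming from the intended application ($u = \Gamma_k/k \geq 0$, with the uniform upper bound on $\gamma_1$ already established in the proof of (\ref{eq:loggrowth})): combined with the elementary pointwise estimate $v(z) \leq \log|z| + O(1)$ (which follows from $|z-z'| \leq |z|(1+|z'|/|z|)$ for $|z| \geq 1$ together with the log moment of $\mu$), this gives $-h = v - u \leq v \leq \log|z| + O(1)$. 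A Borel--Carath\'eodory estimate then shows that the entire harmonic function $-h$, being bounded above by $O(\log|z|)$, is in fact bounded, hence constant by Liouville; $M_h \equiv 0$ forces this constant to vanish, giving $u = v$ as required.
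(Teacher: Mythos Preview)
Your route differs from the paper's: where you argue via the Jensen identity $M_u(R)-M_u(r)=\int_r^R \mu(\overline{B}(0,s))\,\frac{ds}{s}$ and then compare $u$ with the canonical potential $v$, the paper cites a Hadamard-type representation (Hayman--Kennedy, Theorem~4.2) to write $u(z)=\int\log|1-z/z'|\,d\mu(z')+A$ with $A$ a \emph{constant} from the outset, and then extracts the conclusions by computing circle averages of this formula. Your derivation of $\mu(\mathbb C)=1$ and $\int\log_+|z'|\,d\mu(z')<\infty$ from the Jensen identity is clean and avoids that black box; the paper's route is shorter precisely because the representation theorem has already absorbed the ``no extra harmonic part'' step.

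Your observation about the final step is correct and sharp: the counterexample $u=v+\operatorname{Re} z$ (with $v$ the potential of, say, normalised area measure on the unit disc) satisfies $u:\mathbb C\to\mathbb R$, is subharmonic, and has $M_u(R)=\log R$ for $R\ge 1$, yet is not a logarithmic potential. So the hypothesis $M_u(R)=\log R+o(1)$ alone does \emph{not} force the conclusion. The same point bears on the paper's proof: the cited representation theorem, in its standard form, needs $u$ to be of order less than one (control on $\sup_{|z|=R}u(z)$, not merely on $M_u(R)$), so strictly speaking the lemma as stated is slightly too strong. In the intended application this is harmless, because immediately before the lemma is invoked the paper establishes the pointwise bound $\sup_\theta \gamma_1(Re^{i\theta})\le \log R+o(1)$, which puts $u=\Gamma_k/k$ in order zero. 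Your proposed fix---use $u\ge 0$ to get $-h\le v\le \log|z|+O(1)$, then Borel--Carath\'eodory and Liouville to force the entire harmonic function $-h$ to be constant, then $M_h\equiv h(0)=0$ to kill that constant---uses exactly the same extra input and is valid. Your argument is therefore complete for the application, and your diagnosis of what the lemma is missing is accurate.
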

\begin{proof}
We can assume without loss of generality that $u(z)$ is harmonic in a small neighbourhood of the origin. Then (see Hayman and Kennedy \cite[Theorem 4.2]{HK}), $u(z)$ admits a representation of the form
\begin{equation}\label{eq:repr-hk} u(z) =  \int \log|1 - z/z'| d\mu(z') + A~,\end{equation}
where  $\mu = \frac{1}{2\pi} \Delta u$ is the Riesz measure of $u$. The measure $\mu$ has no atoms since $u > -\infty$.  Thus
\[ \begin{split}
M_u(R) &= A + \int d\mu(z') \frac{1}{2\pi} \int_0^{2\pi} \log |1 - Re^{i\theta}/z'| d\theta\\
&= A + \int d\mu(z') \log_+(R/|z'|) \geq A + \mu(B(0, r)) \log(R/r)~, \quad 0 < r < R~,
\end{split} \]
whence
\[ \mu(B(0,r)) \leq \inf_{R > r} \frac{M_u(R) - A}{\log(R/r)} \leq 1~, \quad \mu(\mathbb C) \leq 1~.\]
Next, 
\[ M_u(R) = A + \int d\mu(z') \log_+(R/|z'|) \leq A +  \mu(\mathbb C) \, \log R - \int_{B(0, R)} \log |z'| d\mu(z')~, \]
therefore 
\[  \mu(\mathbb C) = 1~, \quad  \int  \log  |z'| d\mu(z')   = \lim_{R \to \infty} \int_{B(0, R)} \log |z'| d\mu(z') \leq A~.\]
Now we can rewrite (\ref{eq:repr-hk}) as
\[ u(z) =  \int \log|z - z'| d\mu(z') + A'~, \quad A' = A - \int \log |z'| d\mu(z')~.\]
Then
\[ \log R + o(1) = M_u(R) = A' + \log R \,\, \mu(B(0, R)) + \int_{|z'| > R} \log|z'| d\mu(z') = A' + \log R + o(1)~,\]
thus $A' = 0$.
\end{proof}

\begin{lemma}[cf.\ \cite{CS, CS2}] Let $\mu$ be a probability measure on $\mathbb C$ such that 
\[\int \log_+ |z| d\mu(z) < \infty~, \quad \inf_{z \in \mathbb C}  \int \log |z-z'| d\mu(z') \geq 0~.\]
Then
\[\sup \left\{ \frac{\mu(B(z,r)) |\log r|}{1 + \log_+|z|}  \,\, : \,\,  z \in \mathbb C, \, 0 < r \leq 1/e   \right\} < \infty~.\]
\end{lemma}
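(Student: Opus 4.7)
The plan is to exploit the lower bound $U^\mu(z) := \int \log|z-z'|\,d\mu(z') \geq 0$ by splitting the integral into the contributions from inside and outside the ball $B(z,r)$, estimating each piece separately, and solving for $\mu(B(z,r))$.

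Write $U^\mu(z) = I_1 + I_2$, where
\[ I_1 = \int_{B(z,r)} \log|z-z'|\,d\mu(z'), \qquad I_2 = \int_{\mathbb C \setminus B(z,r)} \log|z-z'|\,d\mu(z'). \]
For the near part, since $|z-z'| \leq r \leq 1/e < 1$ on $B(z,r)$, monotonicity of $\log$ gives $\log|z-z'| \leq \log r < 0$, and hence
\[ I_1 \leq \mu(B(z,r))\,\log r = -\mu(B(z,r))\,|\log r|. \]
For the far part, use the crude bound $|z-z'| \leq 1 + |z| + |z'|$ together with the elementary inequality $\log(1+a+b) \leq \log 3 + \log_+ a + \log_+ b$ (checked case by case), yielding
\[ I_2 \leq \mu(\mathbb C \setminus B(z,r))\,(\log 3 + \log_+|z|) + \int \log_+|z'|\,d\mu(z') \leq \log 3 + \log_+|z| + C, \]
where $C := \int \log_+|z'|\,d\mu(z') < \infty$ by hypothesis and $\mu$ is a probability measure.

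Substituting these into $0 \leq U^\mu(z) = I_1 + I_2$ gives
\[ \mu(B(z,r))\,|\log r| \leq I_2 \leq \log 3 + C + \log_+|z| \leq (\log 3 + C + 1)\,(1 + \log_+|z|), \]
which is exactly the claimed bound with constant $\log 3 + C + 1$.

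There is no real obstacle here — the argument is a classical potential-theoretic computation. The only small points to watch are (i) ensuring the sign of $\log r$ is exploited correctly (the hypothesis $r \leq 1/e$ guarantees $|\log r| \geq 1$ and makes the inequality $\log|z-z'| \leq \log r$ genuinely useful), and (ii) handling the tail at infinity of $I_2$, which is where the assumption $\int \log_+|z'|\,d\mu < \infty$ from the previous lemma enters crucially. No cancellation or finer estimate is needed: the uniform lower bound on the logarithmic potential is strong enough on its own.
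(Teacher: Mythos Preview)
Your proof is correct and follows essentially the same approach as the paper: split the logarithmic potential at $z$ into a near part (which contributes at most $-\mu(B(z,r))|\log r|$) and a far part (bounded above via $\log_+|z|$ and the moment condition $\int\log_+|z'|\,d\mu<\infty$), then rearrange using $U^\mu(z)\ge 0$. The only cosmetic difference is that the paper splits at radius $1$ rather than radius $r$, which makes no substantive difference.
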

\begin{proof} For any $z \in \mathbb C$,
\[ \begin{split}0 &\leq \int \log |z-z'| d\mu(z') = \int_{B(z, 1)} \log |z-z'| d\mu(z') + \log_+ |z|  + \int_{|z'-z| > |z|} \log_+ (2|z'|) d\mu(z') \\
&\leq- \int_{B(z, 1)} \log_- |z-z'| d\mu(z')  + (C_\mu + \log_+|z|)~,
\end{split}\]
and consequently for, say, $0 < r \leq 1/e$
\[ \frac{\mu(B(z, r))}{|\log r|} \leq \int_{B(z, 1)} \log_- |z-z'| d\mu(z')  \leq (C_\mu + \log_+|z|) \leq (C_\mu+1) (1 + \log_+ z)~.\qedhere\]
\end{proof}

\section{Proof of Theorem~\ref{thm:rho}}\label{s:rho}

By Theorem~\ref{thm:outsidesp}, we only need to consider $E \in \mathbb R$.

Let $\epsilon > 0$ and $R > 0$.  Let $n_i = \left\lfloor \exp(\tau i) \right\rfloor$, where $\tau$ is sufficiently small to ensure that 
\[ 4 \tau \sup_{E \in [-R, R]} (\gamma_1(E)+ \epsilon) < \epsilon~.\]
 We claim that (almost* surely) any $E \in [-R,R]$  such that 
\[ \liminf_{n \to \infty}  \frac{1}{n} \log  s_W(\Phi_{n,\omega}(E))  \leq \gamma_W(E)  - \epsilon\]
also satisfies
\begin{equation}\label{eq:also}\liminf_{i \to \infty}   \frac{1}{n_i} \log  s_W(\Phi_{n_i,\omega}(E)) \leq \gamma_W(E) - \frac{\epsilon}{2}~. \end{equation}
Indeed, the large deviation estimate (\ref{eq:largedev}) and an interpolation argument such as in \cite[Lemma 2.3]{GS} implies that almost* surely
\begin{align}
&\limsup_{n \to \infty} \frac{1}{n} \sum_{j=1}^k \log s_j (\Phi_{n,\omega}(E)) \leq \sum_{j=1}^k \gamma_j(E) + \frac{\epsilon}{10} \quad (1 \leq k  \leq W)~; \\
& \limsup_{i\to \infty} \frac{1}{n_{i+1}-n_i} \max_{1 \leq m < n_{i+1} - n_i} \log  \| T_{n_i+m, \omega}(E) \cdots T_{n_i,\omega}(E)\| \leq \gamma_1(E) + \epsilon~; 
\end{align}
then for $i$ large enough and $1 \leq m < n_{i+1} - n_i$
\[\begin{split}
&   \frac{1}{n_i + m} \log  s_W(\Phi_{n_i + m,\omega}(E)) -\gamma_W(E)    \\
&\quad \geq  \frac{1}{n_i} \log  s_W(\Phi_{n_i,\omega}(E)) -\gamma_W(E)   -
 \left| \frac{1}{n_i+m} \log  s_W(\Phi_{n_i+m,\omega}(E)) - \frac{1}{n_i} \log  s_W(\Phi_{n_i,\omega}(E)) \right| \\
&\quad \geq   \frac{1}{n_i} \log  s_W(\Phi_{n_i,\omega}(E)) -\gamma_W(E)  - 2 \, \frac{n_{i+1}-n_i}{n_i} (\gamma_1(E) + \epsilon)  \\
&\quad \geq \frac{1}{n_i} \log  s_W(\Phi_{n_i,\omega}(E)) -\gamma_W(E)  - \frac{\epsilon}{2}~,
\end{split}\]
 as claimed.

Further, if $E$ satisfies (\ref{eq:also}), then by Corollary~\ref{thm:cs-unif} 
\begin{equation}\label{eq:also2}\limsup_{i \to \infty}   \frac{1}{n_i} \sum_{j=1}^{W-1} \log  s_j(\Phi_{n_i,\omega}(E)) \leq \sum_{j=1}^{W-1} \gamma_j(E) - \frac{\epsilon}{2}~. \end{equation}

Now we construct a covering of the set of $E \in [-R,R]$ satisfying (\ref{eq:also2}). First cover $[-R, R]$ by a finite union of intervals $I_\alpha$ such that on each of them
\[ \max_j (\sup_{E \in I_\alpha} \gamma_j(E) - \inf_{E \in I_\alpha} \gamma_j(E) )\leq \frac{\epsilon}{4 W}~. \]
Then on each $I = [a_\alpha, b_\alpha]$ we have for any $E\in I_\alpha$ satisfying (\ref{eq:also2}):
\begin{equation}\label{eq:also3}\limsup_{i \to \infty}   \frac{1}{n_i} \sum_{j=1}^{W-1} \log  s_j(\Phi_{n_i,\omega}(E)) \leq \sum_{j=1}^{W-1} \gamma_j(a_\alpha) - \frac{\epsilon}{4}~. \end{equation}
Observe that $\sum_{j=1}^{W-1} \log s_j(\cdot)$ is the logarithm of the operator norm of the $(W-1)$-st wedge power of the matrix $(\cdot)$, and that the relation (\ref{eq:also3}) remains valid if the operator norm is replaced with the maximum $||| (\cdot)^{\wedge (W-1)} |||$ of the matrix entries. Now,  the set of $E \in I_\alpha$  satisfying
\[ \frac{1}{n_i} \log ||| \Phi_{n_i,\omega}(E)^{\wedge(W-1)}||| \leq \sum_{j=1}^{W-1} \gamma_j(a_\alpha) - \frac{\epsilon}{4} \]
 is the union of $O(n)$ intervals $I_{\alpha,\beta}^{(i)}$. By the large deviation estimate (\ref{eq:largedev}), the length of each of these intervals is exponentially small in $n_i$ (almost surely for $i$ large enough). Thus (using (\ref{eq:condrho}))
\[ \sum_i \sum_{\alpha,\beta} \rho(\operatorname{mes} I_{\alpha, \beta}^{(i)})  \leq C \sum_i n_i \rho(e^{-c n_i}) < \infty~,\]
whence
\[ \mes_{\rho} \bigcap_{i=1}^\infty\bigcup_{j\geq i} \bigcup_{\alpha,\beta} I_{\alpha,\beta}^{(j)} = 0~.\tag*{\qed}\]

\section{Proof of Theorem~\ref{thm:limsup}}\label{s:limsup}
Consider the sequence of functions
\[ u_{n,\omega}(z) = \sup_{n' \geq n} \frac{1}{n} \sum_{j=1}^W \log s_j(\Phi_{n',\omega}(z)\pi_F^*)~, \quad n \geq 1~.\]
Since
\[ \sum_{j=1}^W \log s_j(\Phi_{n,\omega}(z)\pi_F^*) \leq \sum_{j=1}^W \log s_j(\Phi_{n,\omega}(z))~, \]
Theorem~\ref{thm:cs} implies that these functions are almost* surely finite. According to a theorem of Cartan \cite[Theorem 2.7.3.1]{Az}, for each $n$ there  exists a subharmonic function $u^*_{n,\omega}(z)$ such that $u_{n,\omega}= u^*_{n,\omega}$ outside a set of zero logarithmic capacity. Let 
\[ u^*_{\omega}(z) = \lim_{n \to \infty} u^*_{n,\omega}(z)~;\]
it is a limit of a non-increasing sequence of subharmonic functions, hence it is either subharmonic or identically $-\infty$. Also,
\[ \limsup_{n \to \infty} \frac{1}{n} \sum_{j=1}^W \log s_j(\Phi_{n,\omega}(z)\pi_F^*) = u^*_{\omega}(z) \]
outside a set of zero logarithmic capacity. 
We shall now prove that almost* surely
\begin{equation}\label{eq:limsup2}
\forall z \in \mathbb C \setminus \mathbb R \,\, \lim_{n \to \infty} u_{n,\omega}(z) = \Gamma_W(z)~,
\end{equation}
whence $u^*_{\omega}= \Gamma_W$ (outside a set of zero measure and hence everywhere on $\mathbb C$). 
This will imply that  
\[ \limsup_{n \to \infty} \frac{1}{n} \sum_{j=1}^W \log s_j(\Phi_{n,\omega}(z)\pi_F^*) = \Gamma_W(z) \]
outside a set of zero logarithmic capacity. This, combined  with the uniform Craig--Simon estimate (\ref{eq:upper}) for $k=W-1$, yields (\ref{eq:limsup1}).

To prove (\ref{eq:limsup2}), recall that by Theorem~\ref{thm:outsidesp}, almost* surely
\begin{equation}\label{eq:rec-unifhyp}\lim_{n \to \infty} \frac{1}{n} \sum_{j=1}^W \log s_j(\Phi_{n,\omega}(z)) = \Gamma_W(z)~, \quad z \in \mathbb C \setminus \mathbb R~.\end{equation}
For every $z \in \mathbb C \setminus \mathbb R$ there is a splitting $\mathbb C^{2W} = F^+(z) \oplus F^-(z)$ into a direct sum of two subspaces such that for every $v \in F^{-}(z)$ the norm $\| \Phi_n(z)v \|$ decays exponentially. Due to (\ref{eq:rec-unifhyp}), for any Lagrangian subspace $F$ such that $F \cap F^{-}(z) = \{0\}$ we have
\[ \lim_{n \to \infty} \frac{1}{n} \sum_{j=1}^W \log s_j(\Phi_{n,\omega}(z) \pi_F^*) = \Gamma_W(z)~,\]
which is what we need to prove. 
On the other hand, if $F \cap F^{-}(z)$ would  contain a non-zero vector,  then the self-adjoint operator $H_\omega^F$ as defined in Section~\ref{s:liminf} would have an eigenvalue at $z\in\mathbb C\setminus \mathbb R$, which is impossible. \qed

\end{document}